\documentclass{article}
\usepackage{graphicx} 

\usepackage[margin=1in]{geometry}
\AtBeginDocument{%
  \setlength{\abovedisplayskip}{6pt plus 2pt minus 2pt}%
  \setlength{\belowdisplayskip}{6pt plus 2pt minus 2pt}%
  \setlength{\abovedisplayshortskip}{3pt}%
  \setlength{\belowdisplayshortskip}{3pt}%
}

\usepackage{amsmath}
\usepackage{amsthm}
\usepackage{amsfonts}
\usepackage{amssymb}
\usepackage{xcolor}
\usepackage{appendix}
\usepackage{hyperref}
\hypersetup{
    colorlinks=true,
    linkcolor=blue,
    citecolor=blue,
    urlcolor=black
}
\usepackage{caption}
\newcommand\blfootnote[1]{%
  \begingroup
  \renewcommand\thefootnote{}\footnote{#1}%
  \addtocounter{footnote}{-1}%
  \endgroup
}

\numberwithin{equation}{section}

\newtheorem{theorem}{Theorem}[section]
\newtheorem{lemma}[theorem]{Lemma}

\newtheorem{corollary}[theorem]{Corollary}

\theoremstyle{remark}

\newtheorem{remark}{Remark}

\renewcommand{\le}{\leqslant} 
 
\renewcommand{\leq}{\leqslant} 
\renewcommand{\geq}{\geqslant}

\newcommand{\ind}{\mathbf{1}}

\newcommand{\probc}{\stackrel{\mathrm{P}}{\longrightarrow}}

\newcommand{\cA}{\mathcal{A}}
\newcommand{\cF}{\mathcal{F}}

\newcommand{\bN}{\mathbb{N}}
\newcommand{\bR}{\mathbb{R}}

\newcommand{\bZ}{\mathbb{Z}}        

\DeclareMathOperator{\E}{\mathbb{E}}
\DeclareMathOperator{\pr}{\mathbb{P}}

\newcommand{\Poi}{\mathrm{Poi}}
\newcommand{\Exp}{\mathrm{Exp}}

\title{Stochastic Dynamics of Low Earth Orbit Near Full Capacity}
\author{Priyank Behera \\ Department of Statistics, Clemson University \and Aditya S. Gopalan \\  Edwardson School of Industrial Engineering, Purdue University \\ \href{mailto:asgopala@purdue.edu}{asgopala@purdue.edu} \and Harsha Honnappa \\ Edwardson School of Industrial Engineering, Purdue University \\ \href{mailto:honnappa@purdue.edu}{honnappa@purdue.edu}}
\date{}

\begin{document}

\maketitle

\begin{abstract}
The capacity of Low Earth Orbit (LEO) to support sustained space operations is under increasing pressure. 
The number of tracked objects larger than 10 cm has grown sharply over the past two decades, driven by the rapid deployment of commercial megaconstellations, legacy debris from historical fragmentation events, and the emerging prospect of orbital data centers and other non-traditional payloads. The concentration of activity in preferential altitude bands --- particularly the 500–600 km regime --- raises urgent questions about how much operational capacity remains and how quickly it could be lost. 
Collision avoidance maneuvers for Starlink alone have grown exponentially, with the annual maneuver rate scaling super-linearly with constellation size. 
This trajectory suggests a potential tragedy of the commons, in which the aggregate behavior of many operators degrades the orbital environment for all.

Existing models for assessing orbital capacity and debris evolution are predominantly deterministic: they track mean population counts of intact satellites and fragments using ordinary differential equations (ODEs). 
While these models provide valuable baseline forecasts, they cannot capture the inherent randomness of the LEO environment. 
Individual collision events are probabilistic, the number of fragments produced in any given breakup varies substantially, and launch schedules introduce further variability. 

This paper develops a stochastic extension of the two-compartment Lotka--Volterra model introduced by Bradley and Wein~\cite{bradley2009space}, which tracks two interacting species --- intact satellites and debris fragments --- and investigates how randomness affects predictions of orbital capacity, critical density thresholds, and the onset of collisional cascading.
The Bradley and Wein model captures the essential dynamics through a generalized Lotka--Volterra system. 
Intact satellites are injected at a prescribed launch rate, lost through atmospheric drag, and destroyed through collisions with other intacts or with fragments. Fragments are created by those same collisions and removed through drag. 
The ODE system has a unique equilibrium representing the carrying capacity of the orbit --- the maximum number of intact satellites the shell can support without triggering runaway debris growth. 
This equilibrium is governed by a criticality parameter, $\Delta$, defined as the difference between the fragment drag rate and the product of the intact-fragment collision efficiency and the equilibrium intact count; this is also defined as the carrying capacity of the orbit. 
When $\Delta$ is positive, drag exceeds fragment creation and the orbit is stable. When $\Delta$ reaches zero, the system is at critical carrying capacity. 
When $\Delta$ becomes negative, collisional cascading begins.

{
If LEO is operated near its capacity, we expect that the numbers of intacts and fragments differ by many orders of magnitude.
In this paper, we study different deterministic and stochastic limits of a certain density-dependent Markov chain which is in the spirit of the Bradley and Wein~\cite{bradley2009space} model.
Due to the difference in order of magnitude of the intact and fragment counts, limits occur on different time-scales: depending on the type of randomness one wishes to study, one needs to examine different time horizons.
There are several types of limits (depending on the time-scale), and thus several distinct pathways through which a collisional cascade could begin.

In this paper, we specifically study the situation when there are many more fragments than intacts.
Here, intact satellite population fluctuates rapidly on a fast time-scale, while fragments evolve slowly, undergoing sustained high-amplitude excursions. Fragments can drift upward over long periods while the intact count appears roughly stable.
Deterministic models are structurally unable to capture this phenomenon.

The multiple time-scale behavior also yields a third time-scale in which a collisional cascade can begin purely from the fluctuation behavior.
This is in stark contrast to results from fluid limits, wherein the collisional cascades are purely a result of the drift exceeding a threshold; functional central limit theorems around these fluid limits also cannot address the fact that fluctuations can cause collisional cascades.

Our results are as follows: on a fast time-scale for the intacts, we obtain both a fluid limit and a functional central limit theorem.
We also obtain a fluid limit and a functional central limit theorem on an intermediate time-scale for the fragments.
On this time-scale, we obtain a drift threshold above which the fluid limit exhibits collisional cascades.
On a third, slow time-scale, we show that the limiting process is a Feller diffusion, whose collisional cascade behavior is stochastic, and determined purely by the fluctuation behavior, instead of by the drift.
This final result is analogous to \emph{heavy traffic} limits in queueing theory, although the result is a Feller diffusion, rather than the traditional Gaussian process.

The practical implications are significant: if debris runaway can occur substantially sooner and with higher probability than deterministic models predict, current capacity assessments may be systematically overoptimistic. 
The fast-slow structure means warning signs may be subtle — the intact population can appear well-behaved while fragments quietly accumulate risk.
Constellation deployment, debris removal investment, and orbital slot allocation should all account for these stochastic effects.
In addition, the slowest time-scale result indicates that planning for debris runaway cannot be a function only of the mean: the variance also plays a key role in determining pathways for debris runaway.
}

\blfootnote{\emph{Acknowledgments:} AG and HH acknowledge a generous Abhi Deshmukh IE Frontiers Grant, from the Edwardson School of Industrial Engineering.}

\end{abstract}

\section{Introduction}
The \emph{Kessler Syndrome}~\cite{kessler1978collision} describes a positive-feedback scenario in Low Earth Orbit (LEO), in which collisions of intact satellites (``intacts'') with space debris fragments (``fragments'')  incite further such collisions, leading to a run-away behavior in the amount of fragments and a significant decrease in the capacity of LEO for intacts.
Traditional analyses of this phenomenon, such as~\cite{bradley2009space, jang2022stability}, use deterministic source-sink models to characterize a \emph{critical Kessler threshold}, above which the creation rate of fragments exceeds their de-orbit rates, causing a runaway behavior.

In this paper, we study the pathways to collisional runaway behavior using deterministic and stochastic approximations to a certain \emph{density-dependent} Markov chain~\cite{ethierkurtz1986} source-sink model of the LEO environment.
Our model includes intact launches, intact and fragment de-orbits, and fragment creation due to intact-intact and intact-fragment catastrophic collisions.
We show that, in addition to the standard critical Kessler threshold obtained from the deterministic approximation, a collisional cascade can be incited purely from the inherent stochasticity of the collision dynamics.
This is of fundamental importance: strategies to mitigate space debris accumulation thus far have only been informed by the average behavior embodied in deterministic models.
The results in this paper demonstrate the need for novel strategies which also account for stochastic variability.

A key feature of this paper is a multi time-scale analysis, which gives a clean mathematical framework to handle the fact that the various components of typical source-sink models are of differing orders of magnitude.
These terms include both the intact and fragment counts, as well as the different dynamics in the system.
For example, launches of new intacts occur frequently, but intact-intact collisions should be exceedingly rare.
These differing orders of magnitude induce three distinct time-scales of interest:
\begin{itemize}
    \item A (``fast'') \emph{intact} time-scale, corresponding to the intact count.
    \item An (``intermediate'') \emph{fragment} time-scale, corresponding to the fragment count.
    \item A (``slow'') time-scale, which captures the long-term effects of fluctuations in the fragment count.
\end{itemize}

We emphasize that the multiple time-scales arise as a consequence of our stochastic model.
For example, consider again the fact that launches of new intacts occur frequently, but intact-intact collisions should be exceedingly rare.
In a deterministic model, this difference arises in the coefficients of the relevant terms of an ordinary differential equation, and so the impact of intact-intact collisions can be overstated in these types of models: the intact-intact collisions do not make a meaningful contribution to the fragment counts in our large-system asymptotic (see, \textit{e.g.}, Theorem~\ref{prop:slow}).
Nevertheless, we can obtain deterministic models in the style of Bradley and Wein~\cite{bradley2009space} as special cases of our analysis (see Remark~\ref{rem:B-W}).
In addition, our result that a collisional cascade can be elicited purely from the stochasticity in the collision dynamics relies crucially on a precise identification of the slow time-scale.

We now discuss the implications of the time-scale separation.
Intuitively, with many more fragments than intacts, fluctuations in the fragment count occur on longer time horizons than those in the intact count.
From the perspective of the intact count (fast time-scale), the fragment count appears frozen at a constant value, and no critical Kessler threshold can be identified.
In order to identify such a threshold, one instead needs to consider the dynamics for the fragment count on a slower (intermediate) time-scale.
See Figure~\ref{fig:two-clocks} for a visual comparison of the intact and fragment counts on the fast and intermediate time-scales.
\begin{figure}[h!]
    \centering
    \includegraphics[width=0.8\linewidth]{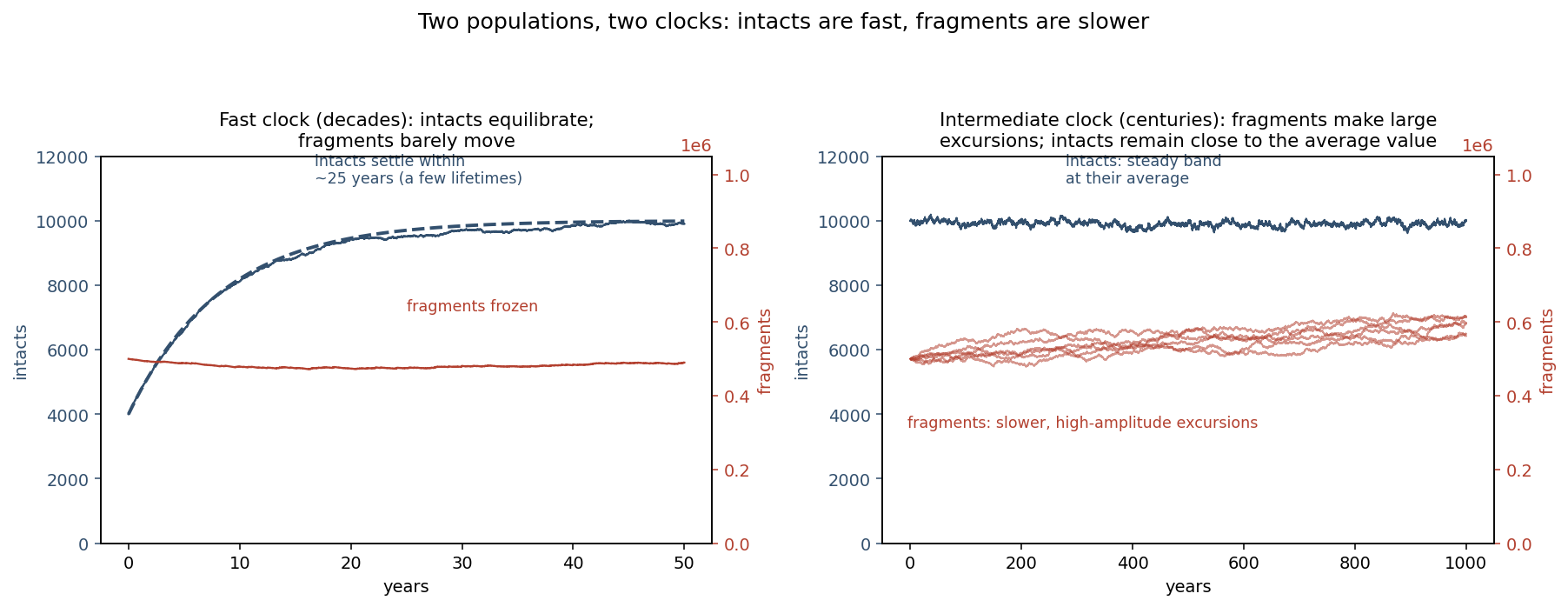}
    \caption{Simulations of intact and fragment population dynamics on the fast and intermediate time-scales, respectively.}
    \label{fig:two-clocks}
\end{figure}

The first of our results is in this direction.
On both the intact (fast) and fragment (intermediate) time-scale, we derive ordinary differential equation (ODE) approximations when the intact and fragment counts are large.
We also derive stochastic differential equation (SDE) approximations to study the effect of stochastic fluctuations around these ODE approximations.
These SDE approximations are Gaussian processes, as is typical for this type of approximation.
The ODE and SDE approximations are different for each of these time-scales.
On the (intermediate) fragment time-scale, we identify a critical Kessler threshold from the ODE approximation.
These results are a significant refinement of the traditional ODE analyses in, \textit{e.g.}, Bradley and Wein~\cite{bradley2009space} due to the incorporation of multiple time-scales.

In the context of understanding the pathways to Kessler Syndrome, these first results fall short: the analysis cannot address the question of whether or not a collisional cascade can begin purely by random chance.

We address this question by identifying a third, slow time-scale.
This slow time-scale captures noise terms that average out in the previous Gaussian process SDE approximation.
The resulting approximation is a \emph{Feller diffusion}.
The key difference between the Gaussian process and the Feller diffusion is that in the Gaussian process, noise is merely additive, and thus it averages out.
In the Feller diffusion, noise \emph{compounds}, and this is what allows us to see the Kessler runaway.

Our most important result is that on the slow time-scale, we show that a Kessler runaway can, in fact, be incited purely from random fluctuations in the collision dynamics.
In this regime, the Kessler runaway is itself a random behavior and is no longer determined by a fixed threshold.
As mentioned above, this has the fundamental operational implication that debris mitigation strategies must account for the variance, and not only the mean.



We note that these analytical results quantify the time-scales on which the capacity region of the LEO environment is reached, but only from the physics-driven dynamics of the intact and fragment counts. These results do not yet have implications for the \emph{carrying capacity} of the LEO environment. However, the time-scale analysis can be applied to models that include behavioral and economic constraints.

\paragraph{\bf Organization of this Paper.}
The rest of this paper is organized as follows.
In Section~\ref{sec:model}, we introduce our compartmentalized model and its underlying assumptions.
In Section~\ref{sec:main-results}, we state our main results and their implications.
The remaining sections prove the main results.


\section{Model}
\label{sec:model}
We consider a particles-in-a-box model of the populations of \emph{intact satellites (intacts)} and \emph{debris fragments (fragments)} in Low Earth Orbit (LEO).
\emph{We count only those fragments which are energetic enough to cause catastrophic collisions with intacts.}
We will make the simplifying assumption that the catastrophic collision of an intact only produces one fragment.

There are 5 components to our dynamics, which can be expressed analogously to a chemical reaction network.
Here, $I$ represents a single intact, and $F$ represents a single fragment.
For example, dynamics~\eqref{eq:rxn1} represents the launch of a new intact, and dynamics~\eqref{eq:rxn3} represents a catastrophic collision between an intact and a fragment, resulting in the creation of a new fragment.
For each of the dynamics below, the constant $\kappa_k$ ($k = 1, \ldots, 5)$ denotes the rate constant for the dynamics.
For example, $\kappa_3$ is the rate at which each intact-fragment pair catastrophically collides.
We use the symbol $\varnothing$ to denote exogenous intact launches in dynamics~\eqref{eq:rxn1}, and intact and fragment de-orbits, respectively, in dynamics~\eqref{eq:rxn4} and~\eqref{eq:rxn5}.
We use the mathematical chemistry terminology that the fragment population is \emph{autocatalytic}: fragments serve as catalysts for the creation of new fragments. Figure~\ref{fig:crn_transition} provides a visual depiction of the dynamics.

\noindent
\begin{minipage}[c]{0.3\textwidth}
\setlength{\abovedisplayskip}{0pt}
\setlength{\belowdisplayskip}{0pt}
\centering
\begin{align}
    \varnothing &\xrightarrow{\kappa_1} I 
    \label{eq:rxn1}
    \\
    2I &\xrightarrow{\kappa_2} 2F
    \label{eq:rxn2}
    \\
    I + F &\xrightarrow{\kappa_3} 2F
    \label{eq:rxn3}
    \\
    I &\xrightarrow{\kappa_4} \varnothing
    \label{eq:rxn4}
    \\
    F &\xrightarrow{\kappa_5} \varnothing
    \label{eq:rxn5}
\end{align}
\end{minipage}
\hfill
\begin{minipage}[c]{0.65\textwidth}
  \centering
   \includegraphics[width=\linewidth]{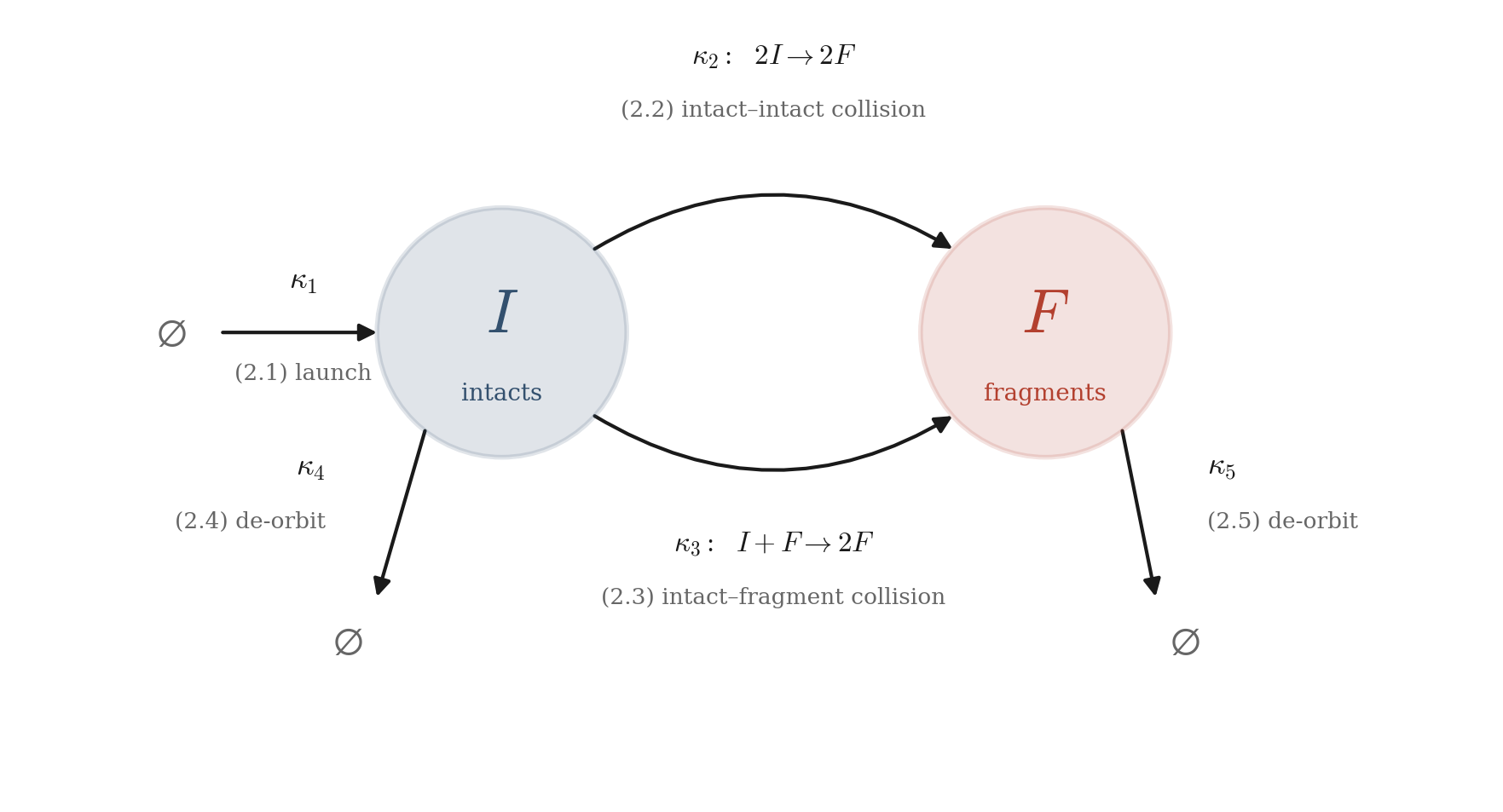}
\end{minipage}%
\par
\captionof{figure}{Transition diagram for the dynamics~\eqref{eq:rxn1}--\eqref{eq:rxn5}. Intacts $I$ enter through exogenous launches~\eqref{eq:rxn1} and are removed either directly by de-orbit~\eqref{eq:rxn4} or by conversion into fragments through intact--intact~\eqref{eq:rxn2} and intact--fragment~\eqref{eq:rxn3} catastrophic collisions; fragments $F$ accumulate from those collisions and are removed only by de-orbit~\eqref{eq:rxn5}. 
}
\label{fig:crn_transition}
\vspace{1em}
The rate constants $\kappa_1, \ldots \kappa_5$ should be interpreted as follows.
$\kappa_1$ is the launch rate and is determined by constellation operators.
$\kappa_4$ and $\kappa_5$ are average de-orbit rates, and depend on the mass and velocity of intacts and fragments; we take $\kappa_4$ and $\kappa_5$ as aggregated over the population of intacts and fragments (resp.). The constants $\kappa_2$ and $\kappa_3$ are to be interpreted as follows.
Assuming that an intact collides with any other intact (respectively, fragment) independently, the intact-intact collision process (respectively, intact-fragment collision process) is approximately a Poisson process when viewed over a sufficiently small time-scale.
The constants $\kappa_2$, $\kappa_3$ include the effects of intact and fragment velocity, as well as the impact of orbital shell height.
One could, for a sufficiently well-occupied orbital shell, treat $\kappa_2$ and $\kappa_3$ as parameters of that shell, but one could also aggregate their values over all of LEO.

For each of the dynamics, we track the \emph{input} and \emph{net change}.
The input tracks the numbers of intacts and fragments needed to produce the dynamics, and the net change tracks the change in the total counts of intacts and fragments should a dynamics actually occur.
The corresponding input vectors are: $\nu_1 := (0, 0)$, $\nu_2 := (2, 0)$, $\nu_3 := (1, 1)$, $\nu_4 := (1, 0)$, $\nu_5 := (0, 1)$;
and the corresponding net-change vectors are: $\zeta_1 := (1, 0)$, $\zeta_2 := (-2, 2)$, $\zeta_3 := (-1, 1)$, $\zeta_4 := (-1, 0)$, $\zeta_5 := (0, -1)$.

Let $X(t) := (X_I(t), X_F(t))$ denote the process of intact and fragment counts: that is, the numbers of intacts and fragments, respectively.
The orders of magnitudes of $X_I$ and $X_F$ are significantly different; therefore, time-scale separation is a relevant phenomenon for studying Kessler Syndrome.
Our goal is to use limiting approximations in the forms of ordinary and stochastic differential equations in order to \emph{quantitatively} explain how and when Kessler Syndrome occurs. 
To do that we need to bring both $X_I$ and $X_F$ to $O(1)$.
To do this, we introduce the system parameter $N$ and the notation $X^N(t) := (X^N_I(t), X^N_F(t))$.
The indexing parameter $N$ captures ``system size'' via \[N^{-\alpha_I}X^N_I(t) = O(1) \quad \text{and} \quad N^{-\alpha_F}X^N_F(t) = O(1),\] for some constants $\alpha_I, \alpha_F$ which do not depend on $N$.
We refer to $\alpha_I$ and $\alpha_F$ as \emph{abundances}.
If $\alpha_I = \alpha_F = 1$, then the abundances can be interpreted as concentrations of intacts or fragments per unit volume; the generalization from concentration to abundance is due to the several orders of magnitude difference between $X_I$ and $X_F$.
Denote $\alpha := (\alpha_I, \alpha_F)$.
When $\alpha_I = \alpha_F = 1$, our system dynamics converge to standard particles-in-a-box ordinary differential equations similar to those in Bradley and Wein~\cite{bradley2009space}.

The rate constants themselves may also vary over several orders of magnitude: for example, one expects that launches (dynamics~\eqref{eq:rxn1}) occur at rate $O(1)$, while the rate of intact-intact catastrophic collisions (dynamics~\eqref{eq:rxn2}) should be exceedingly rare per intact-intact pair.
To handle these order of magnitude differences in the rate constants, let $\beta_1, \ldots, \beta_5$ be such that $N^{\beta_i}\kappa_i = O(1)$, for $i = 1, \ldots,  5$.
Set $\rho_k := \beta_k + \alpha \cdot \nu_k$, for $k = 1, \ldots, 5$: here, $\alpha \cdot \nu$ is the inner product between vectors $\alpha$ and $\nu$.

The system dynamics are as follows:
Let $Y_1,\ldots,Y_5$ be independent unit-rate Poisson processes. Then our system dynamics are given by:
\begin{align*}
X_I^N(t)
&=X_I^N(0)
+Y_1\!\left(\kappa_1 t\right) 
-2Y_2\!\left(\int_0^t \kappa_2 X^N_I(s)\bigl(X^N_I(s)-1\bigr)\,ds\right) 
-Y_3\!\left(\int_0^t \kappa_3 X^N_I(s)X^N_F(s)\,ds\right)
-Y_4\!\left(\int_0^t \kappa_4 X^N_I(s)\,ds\right),
\\[1em]
X^N_F(t)
&=X^N_F(0)
+2Y_2\!\left(\int_0^t \kappa_2 X^N_I(s)\bigl(X^N_I(s)-1\bigr)\,ds\right) 
+Y_3\!\left(\int_0^t \kappa_3 X^N_I(s)X^N_F(s)\,ds\right)
-Y_5\!\left(\int_0^t \kappa_5 X^N_F(s)\,ds\right).
\end{align*}

Define the process $Z^N(t) := (Z^N_I(t), Z^N_F(t)) = \left(N^{-\alpha_I}X^N_I(t),N^{-\alpha_F}X^N_F(t)\right)$ and the re-scaled system dynamics:
\begin{align*}
Z_I^N(t)
&=Z_I^N(0)
+N^{-\alpha_I}
Y_1\!\left(
N^{\beta_1}\kappa_1 t
\right) 
-2N^{-\alpha_I}
Y_2\!\left(
N^{\beta_2+2\alpha_I}\kappa_2
\int_0^t Z_I^N(s)
\left(Z_I^N(s)-N^{-\alpha_I}\right)\,ds
\right) \\
&\quad
-N^{-\alpha_I}
Y_3\!\left(
N^{\beta_3+\alpha_I+\alpha_F}\kappa_3
\int_0^t Z_I^N(s)Z_F^N(s)\,ds
\right)
-N^{-\alpha_I}
Y_4\!\left(
N^{\beta_4+\alpha_I}\kappa_4
\int_0^t Z_I^N(s)\,ds
\right),
\\[1em]
Z_F^N(t)
&=Z_F^N(0)
+2N^{-\alpha_F}
Y_2\!\left(
N^{\beta_2+2\alpha_I}\kappa_2
\int_0^t Z_I^N(s)
\left(Z_I^N(s)-N^{-\alpha_I}\right)\,ds
\right) 
+N^{-\alpha_F}
Y_3\!\left(
N^{\beta_3+\alpha_I+\alpha_F}\kappa_3
\int_0^t Z_I^N(s)Z_F^N(s)\,ds
\right) \\
&\quad
-N^{-\alpha_F}
Y_5\!\left(
N^{\beta_5+\alpha_F}\kappa_5
\int_0^t Z_F^N(s)\,ds
\right).
\end{align*}

In Sections~\ref{sec:fluid-limit} and~\ref{sec:diffusion-limit}, we will identify two time-scales $\gamma_I$ and $\gamma_F$, corresponding to the intacts and fragments (resp.), as well as the relevant ODE and SDE approximations of $(Z_I^N, Z_F^N)$ on those time-scales.
In Section~\ref{sec:critical}, we will consider a different SDE approximation as the system approaches criticality, on a slower $\gamma_F + \alpha_F$ time-scale.


\subsection{Determining Time-Scales $\gamma_I$ and $\gamma_F$}
\label{ssec:time-scales}
First, we will \emph{balance} each of the compartments $I$ and $F$: we require that (up to order of magnitude), the rates of creation of new intacts (respectively, fragments) matches the rates of removal. 
Without this balance, we do not have $Z^N_I(t) = O(1)$ and $Z^N_F(t) = O(1)$, both of which are required to derive fluid and diffusion approximations.

This balance also allows us to determine natural time-scales $\gamma_I$ and $\gamma_F$ for intacts and fragments, respectively, where limiting dynamics for the system are well-defined.
Specifically, we find limits for the processes $Z^{N, \gamma_I}$ and $Z^{N, \gamma_F}$.
Here, \emph{process $Z^{N, \gamma}(t)$ on the $\gamma$ time-scale} is given by:
\[Z^{N, \gamma}(t) := Z^N(N^{\gamma}t) = (Z^N_I(N^\gamma t), Z^N_F(N^\gamma t)).\]
The fact that there are \emph{different} values for $\gamma_I$ and $\gamma_F$ is precisely what is meant by the separation of time-scales.




We now determine the time-scales $\gamma_I$ and $\gamma_F$.
Notice that intacts are only created by dynamics~\eqref{eq:rxn1}, and they are removed by dynamics~\eqref{eq:rxn2},~\eqref{eq:rxn3},~\eqref{eq:rxn4}.
For balance, we therefore require:
\[\rho_1 = \max(\rho_2, \rho_3, \rho_4).\]
We get a natural time-scale $\gamma_I$ for the intact population via:
\[0 \leq \gamma_I := \alpha_I - \rho_1 = \alpha_I - \max( \rho_2, \rho_3, \rho_4).\]
Similarly, fragments are created by dynamics~\eqref{eq:rxn2},~\eqref{eq:rxn3} and are removed only by dynamics~\eqref{eq:rxn5}.
We require:
\[\rho_5 = \max(\rho_2, \rho_3).\]
We get a natural time-scale for the fragment population via:
\[0 \leq \gamma_F := \alpha_F - \rho_5 =  \alpha_F - \max(\rho_2, \rho_3).\]
For technical reasons, we will also require: 
\begin{equation}
    \max(\gamma_I, \gamma_F) \leq \max(\alpha_I, \alpha_F) - \max_{k \in \{1, \ldots, 5\}}\rho_k.
    \label{eq:extra-balance-constraint}
\end{equation}
Briefly, the reason for the constraint~\eqref{eq:extra-balance-constraint} is as follows.
Even if the balance $\rho_1 = \max(\rho_2, \rho_3, \rho_4)$ and $\rho_5 = \max(\rho_2, \rho_3)$ are satisfied, it is still possible that the total rate (up to order of magnitude) of creation of both intacts and fragments does not match the total rate of their de-orbits.
See~\cite{kang2013separation} for an example.
We expect that requirement~\eqref{eq:extra-balance-constraint} should be automatically satisfied for realistic values of the system parameters for LEO operations whenever $\rho_1 = \max(\rho_2, \rho_3, \rho_4)$ and $\rho_5 = \max(\rho_2, \rho_3)$ are satisfied.

\subsection{Assumptions on Parameters}
We make the following modeling assumptions.
First, we assume that the balance for intacts is due to de-orbiting, and \emph{not} due to catastrophic collision.
That is, dynamics~\eqref{eq:rxn1} is balanced by dynamics~\eqref{eq:rxn4} ($\rho_1 = \rho_4$).
Next, we assume that the removal of fragments in dynamics~\eqref{eq:rxn5} is balanced by their creation due to catastrophic collisions of intacts with fragments in dynamics~\eqref{eq:rxn3} ($\rho_5 = \rho_3$; that is, intact-intact catastrophic collisions are very rare compared to intact-fragment catastrophic collisions).

From these, we get the following:
\begin{itemize}
    \item $\beta_1 = \beta_4 + \alpha_I$ (since $\rho_1 = \rho_4)$,
    \item $\beta_1 > \beta_2 + 2\alpha_I$ (since $\rho_1 > \rho_2)$,
    \item $\beta_1 > \beta_3 + \alpha_I + \alpha_F$ (since $\rho_1 > \rho_3$),
    \item $\beta_3 + \alpha_F > \beta_2 + \alpha_I$ (since $\rho_3 > \rho_2$),
    \item $\beta_5 = \beta_3 + \alpha_I$ (since $\rho_5 = \rho_3$).
\end{itemize}

We make two final assumptions. 
First, we assume that the abundance of fragments exceeds the abundance of intacts.
This assumption is so that we can carefully understand dynamics that may lead to Kessler-Syndrome-type runaway behavior.
Second, the typical life time of a fragment exceeds the lifetime of an intact.
That is,
\begin{itemize}
    \item $\alpha_F > \alpha_I$,
    \item $\beta_4 > \beta_5$.
\end{itemize}



\section{Informal Statement of Results}
\label{sec:main-results}

We give informal statements of our main results for the sake of readability to a broader audience.
Formal theorem statements and their proofs are deferred to the remaining sections in this paper.
The full results for each subsection here are in the section with the corresponding title.

\subsection{Intrinsic Versus Extrinsic Time-Scale Separation}

Our analysis is intimately related to time-scale separation in fast-slow dynamics.
\emph{Khasminskii averaging}~\cite{Khasminskij1968OnTP} is perhaps the most widely used technique for time-scale separation between a slow and a fast stochastic process.
In Khasminskii averaging, an external parameter $\varepsilon$ (in the context of our paper, $\varepsilon$ could represent, \textit{e.g.} the volume of an orbital shell; see \cite{choi2026} where such a scaling is presented for a source-sink LEO capacity model) is introduced, and limits are taken as $\varepsilon$ is sent to zero.
The parameter $\varepsilon$ captures the speed difference between the time-scales of two processes, but notably the relative speed is not a fixed quantity.
We will refer to this as \emph{extrinsic} time-scale separation.

We are instead concerned with \emph{intrinsic time-scale separation} in the spirit of Kang and Kurtz~\cite{kang2013separation} and Kang, Kurtz, and Popovic~\cite{kang2014central}.
Here, some mild additional assumptions are required, but we are able to explicitly identify the different time-scales on which different processes and their limits exist.
Operationally, this is a more significant result, as it informs debris mitigation and intact launch strategies.
Further, our result on the Feller diffusion relies crucially on the ability to identify the relative time-scales for the intact and fragment populations, which cannot be done explicitly via extrinsic methods.

\subsection{ODE Approximations on the Fast and Intermediate Time-Scales}
Recall the balance conditions of Section~\ref{ssec:time-scales}, and also recall that we use the index $N$ as a measure of ``system size.''
We start with the fast time-scale.
By our assumptions that $\rho_1 > \rho_2$ and $\rho_1 > \rho_3$, it follows that dynamics~\eqref{eq:rxn2} and~\eqref{eq:rxn3} do not contribute to an ODE approximation on the intact time-scale.
Since dynamics~\eqref{eq:rxn5} is balanced by~\eqref{eq:rxn3}, we expect that the fragment counts should remain constant in the ODE approximation.
Indeed, as long as the initial conditions $Z^N(0)$ converge to $(z_{I, 0}, z_{F, 0})$ with $z_{I, 0} > 0$ and $z_{F, 0} > 0$, we have:
\begin{align*}
    Z_I^{N, \gamma_I}(t) &\to \bar{z}_I(t),
    \\
    \dot{\bar{z}}_I(t) &= \kappa_1 - \kappa_4\bar{z}_I(t), \qquad \bar{z}_I(0) = z_{I, 0};
    \\
    Z_F^{N, \gamma_I}(t) &\to z_{F, 0}.
\end{align*}

Similar reasoning allows us to get the ODE approximation for the intermediate time-scale.
Indeed, the fragments are created by dynamics~\eqref{eq:rxn3}, and that is balanced by dynamics~\eqref{eq:rxn5}.
Each fragment collides with intacts at rate proportional to the numbers of intacts, which are governed by dynamics~\eqref{eq:rxn1} and~\eqref{eq:rxn4} as discussed previously.
Denoting $z_I^\star := \frac{\kappa_1}{\kappa_4}$, we have:
\begin{align*}
    Z_I^{N, \gamma_F}(t) &\to z_I^\star;
    \\
    Z_F^{N, \gamma_I}(t) &\to \bar{z}_F(t),
    \\
    \dot{\bar{z}}_F(t) &= (\kappa_3 z_I^\star - \kappa_5)\bar{z}_F(t), \qquad \bar{z}_F(0) = z_{F, 0}.
\end{align*}

See Figure~\ref{fig:ode-approx} for the convergence to the ODE approximations in Theorems~\ref{prop:fast} and~\ref{prop:slow}.

\begin{figure}[h!]
    \centering
    \includegraphics[width=0.8\linewidth]{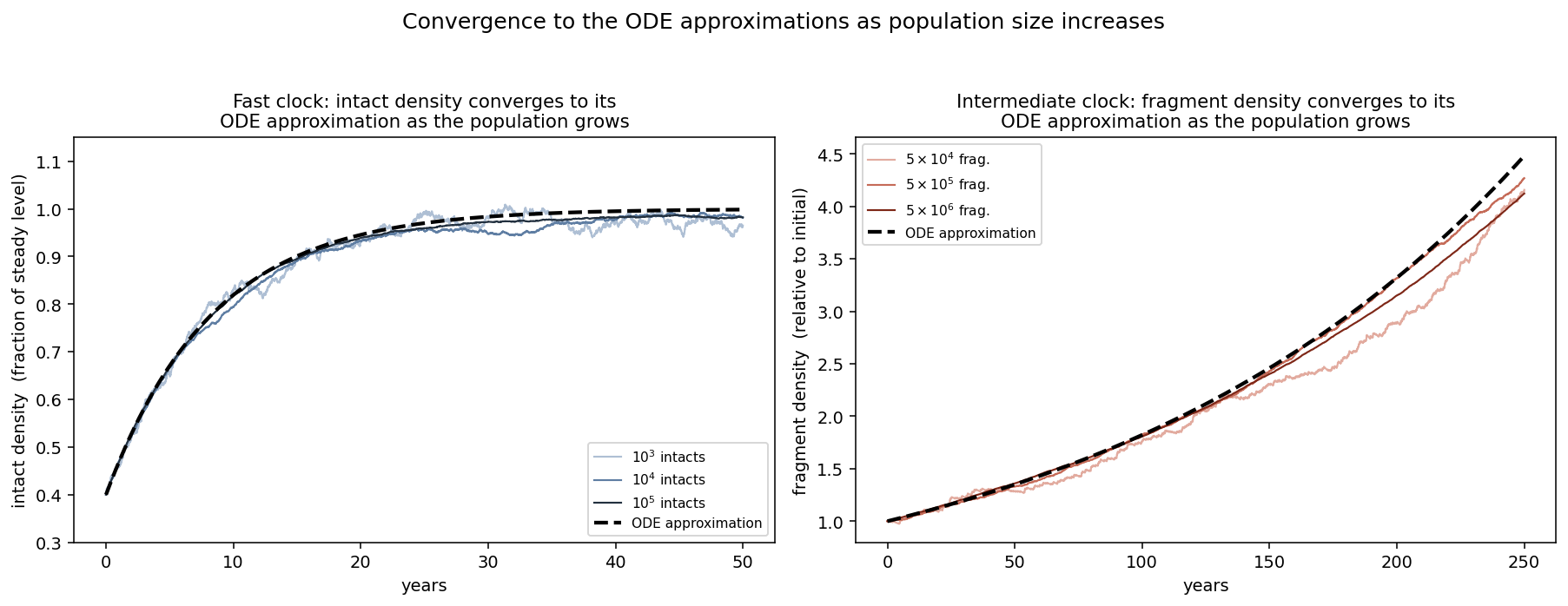}
    \caption{Convergence to the ODE approximation for the intacts on the fast time-scale, and the fragments on the intermediate time-scale.}
    \label{fig:ode-approx}
\end{figure}

We can easily obtain the critical Kessler threshold from this intermediate time-scale limit.
Indeed, observe that the sign of $\kappa_3 z_I^\star - \kappa_5$ exactly determines the behavior of $\bar{z}_F$: the limiting or critical capacity is $\frac{\kappa_5}{\kappa_3}$, and runaway collisions occur when $z_I^\star$ exceeds this threshold.
See Figure~\ref{fig:kessler-threshold} for a depiction of the system dynamics below, above, and at the critical Kessler threshold.

\begin{figure}[h!]
    \centering
    \includegraphics[width=0.8\linewidth]{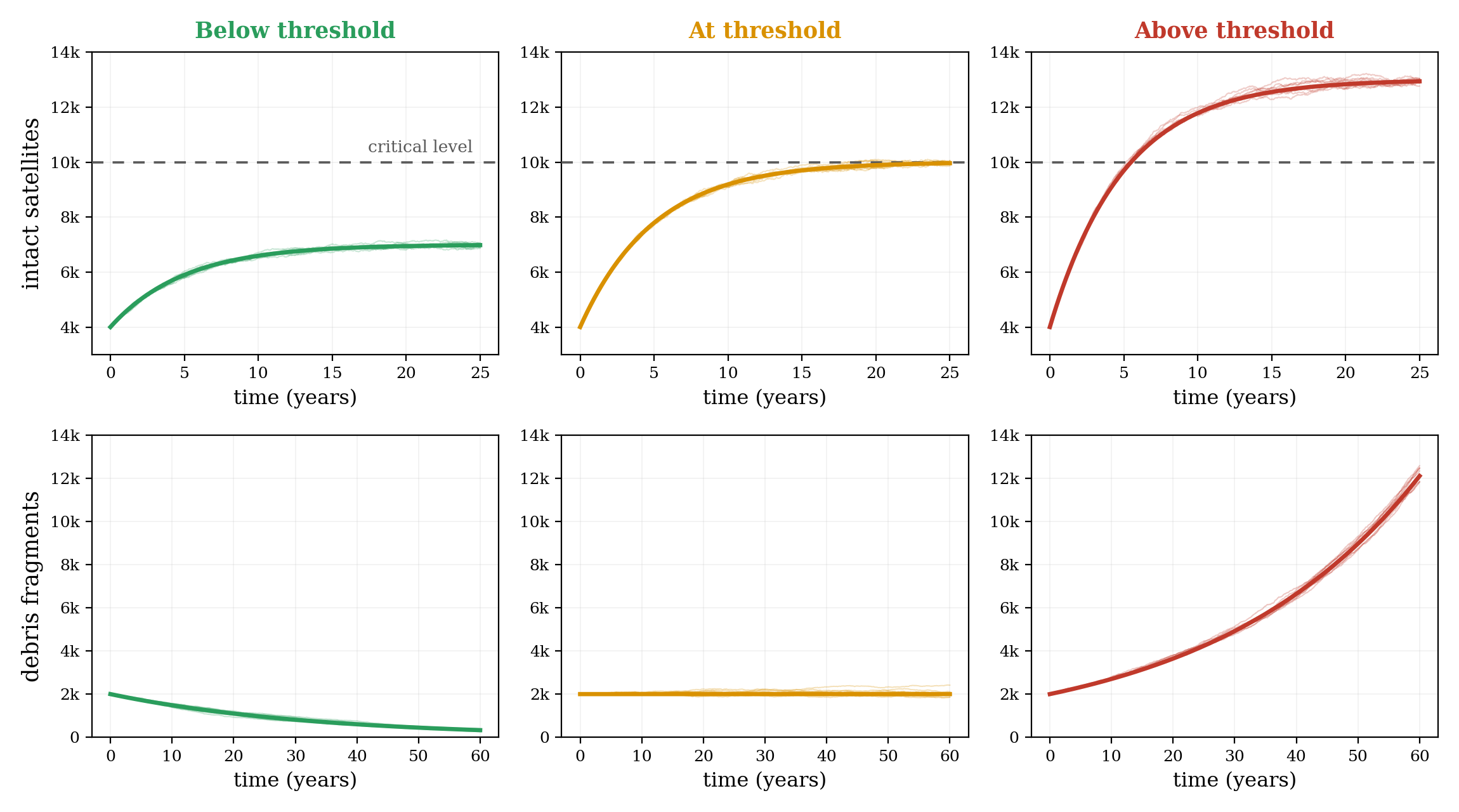}
    \caption{Simulations of the intact and fragments below, at, and above the critical Kessler threshold, respectively.}
    \label{fig:kessler-threshold}
\end{figure}

\subsection{SDE Approximations on the Fast and Intermediate Time-Scales}
Here, we give intuition behind the SDE approximations on the fast and intermediate time-scales, so that the need for the slow time-scale is clear.

The ODE approximations of the previous subsection track only the average behavior of the two populations.
To quantify the random fluctuations \emph{around} these averages, we center each population at its ODE approximation and magnify the difference by the square root of its abundance, $N^{\alpha_I/2}$ for intacts and $N^{\alpha_F/2}$ for fragments.
On each time-scale, the rescaled fluctuations converge to a \emph{Gaussian} process, driven by a standard Brownian motion $B$.
As above, we assume that \[N^{\alpha_I/2}\big(Z_I^N(0) - \bar z_I^N(0)\big) \Rightarrow v_0, \qquad N^{\alpha_F/2}\big(Z_F^N(0) - \bar z_F^N(0)\big) \Rightarrow u_0,\]
where $v_0$ and $u_0$ are random variables, and $\Rightarrow$ denotes weak convergence.

On the fast (intact) time-scale, the intact fluctuations obey a mean-reverting (Ornstein--Uhlenbeck) SDE:
\begin{align*}
    N^{\alpha_I/2}\big(Z_I^{N, \gamma_I}(t) - \bar{z}_I(t)\big) &\to V(t),
    \\
    dV(t) &= -\kappa_4\, V(t)\,dt + \sqrt{\kappa_1 + \kappa_4\, \bar{z}_I(t)}\;dB(t).
\end{align*}
The restoring drift $-\kappa_4 V$ pulls the fluctuations back toward zero: launches and de-orbits continually return the intact population to its ODE trajectory, so the intacts settle into a steady Gaussian band around $\bar{z}_I$.

On the intermediate (fragment) time-scale, the fragment fluctuations obey a linear SDE whose drift is governed by the \emph{same} growth rate $\kappa_3 z_I^\star - \kappa_5$ that governs the fragment ODE:
\begin{align*}
    N^{\alpha_F/2}\big(Z_F^{N, \gamma_F}(t) - \bar{z}_F(t)\big) &\to U(t),
    \\
    dU(t) &= (\kappa_3 z_I^\star - \kappa_5)\, U(t)\,dt + \sqrt{(\kappa_3 z_I^\star + \kappa_5)\, \bar{z}_F(t)}\;dB(t).
\end{align*}
These Gaussian approximations are, however, insufficient to determine if Kessler Syndrome can be caused purely by randomness in the collision dynamics.
The fragment fluctuation $U$ is only a mean-zero correction, of relative size $N^{-\alpha_F/2}$, around the ODE term $\bar{z}_F$.
Away from the critical Kessler threshold --- that is, when $\kappa_3 z_I^\star - \kappa_5 \neq 0$ --- the sign of this rate already decides the fate of the fragment population (decay or runaway), and the Gaussian correction stays uniformly subordinate to $\bar{z}_F$: it merely places a thin random band around the ODE trajectory.

The single regime in which the fluctuations are \emph{not} negligible is exactly at the critical Kessler threshold $z_I^\star = \frac{\kappa_5}{\kappa_3}$.
There the drift $\kappa_3 z_I^\star - \kappa_5$ vanishes, the term $\bar{z}_F$ is constant, and the accumulating noise is no longer held in check by any restoring force.
The fluctuations then grow to the order of $\bar{z}_F$ only after a time horizon of order $N^{\alpha_F}$.
This is precisely why the Gaussian SDE approximations cannot, on their own, determine whether a collisional cascade can be ignited purely by randomness in the collision dynamics: at criticality the decisive dynamics live on the longer time-scale $\gamma_F + \alpha_F$, where, as we show next, the correct limit is a \emph{Feller diffusion}.

\subsection{Slow Time-Scale Limit at the Critical Kessler Threshold}

We consider a \emph{near-critical family}, in which the rate constants $\kappa_k^{(N)}$ vary with $N$ so that the intact steady state approaches the critical abundance $z_I^{\mathrm{crit}} := \frac{\kappa_5}{\kappa_3}$ at exactly the right speed.
The single parameter that survives in the limit is the rescaled distance to the threshold,
\[
    \theta := \lim_{N} N^{\alpha_F}\, \Delta_N, \qquad \Delta_N := \kappa_3^{(N)}\, z_I^{\star, N} - \kappa_5^{(N)} .
\]
On this slow time-scale, the time-changed fragment density converges to a \emph{Feller diffusion} absorbed at $0$:
\begin{align}
    Z_F^{N, \gamma_F}\big(N^{\alpha_F}\tau\big) &\to z_F(\tau),
    \nonumber
    \\
    dz_F(\tau) &= \theta\, z_F(\tau)\,d\tau + \sqrt{2\kappa_5\, z_F(\tau)}\;dB(\tau), \qquad z_F(0) = z_{F, 0} .
    \label{eq:feller-diffusion}
\end{align}

The Feller diffusion differs from the Gaussian approximations in one essential way.
In the Gaussian process the noise is \emph{additive}: it sits on top of the deterministic trajectory and averages out due to symmetry.
In the Feller diffusion the noise is \emph{multiplicative} --- its size $\sqrt{2\kappa_5 z_F}$ grows with the fragment count --- so fluctuations \emph{compound} rather than cancel.
An upward excursion enlarges the noise, which enables a further excursion, and so on.
This compounding is precisely what allows a collisional cascade to start purely from the randomness in the collisional dynamics, and requires the multiplicative noise of the Feller diffusion.

The parameter $\theta$ now plays the role that the sign of $\kappa_3 z_I^\star - \kappa_5$ played in the ODE approximation, except that the deterministic sign test is replaced by a \emph{probabilistic} one.
Writing $z_{F, 0}$ for the initial fragment density:
\begin{itemize}
    \item \emph{Below the threshold} ($\theta < 0$): the fragment population still goes extinct, as the ODE predicts --- but before clearing, it can make large random excursions, growing by a factor of order $\frac{\kappa_5}{|\theta|\, z_{F, 0}}$ with probability bounded away from zero. This excursion scale diverges as the threshold is approached.
    \item \emph{At the threshold} ($\theta = 0$): the deterministic drift in~\eqref{eq:feller-diffusion} is exactly zero, yet the population reaches any prescribed level $L$ with probability $\frac{z_{F, 0}}{L}$, purely from the noise in the collision dynamics. Extinction still occurs almost surely, but the time to extinction is heavy-tailed.
    \item \emph{Above the threshold} ($\theta > 0$): the population survives with positive probability $1 - e^{-\theta z_{F, 0}/\kappa_5}$, and on the event of survival it grows exponentially --- a stochastic Kessler runaway.
\end{itemize}

Thus, near the critical threshold, whether the debris field runs away is itself random, governed by the randomness in the collisional dynamics rather than by the ODE drift.
Operationally, this means that planning for a Kessler cascade cannot rely on the mean behavior alone: the variance of the collision dynamics is itself a determinant of whether runaway occurs.

It is important to note that the convergence to the Feller diffusion is the same type of convergence used for the ODEs on the fast and intermediate time-scales, other than the time-scaling.
Even though the Feller diffusion is also an SDE, this is a fundamentally different result than the Gaussian SDEs on the fast and intermediate time-scales, which are obtained by zooming into the fluctuations around the ODE.
This is, at an intuitive level, why the Feller diffusion can capture the stochastic Kessler Syndrome, but the Gaussian SDE cannot.

See Figure~\ref{fig:gaussian-feller} for a comparison of the Feller diffusion to the Gaussian process approximation and Figure~\ref{fig:runaway-probability} for a depiction of runaway probabilities.

\begin{figure}[t]
    \centering
    \includegraphics[width=0.8\linewidth]{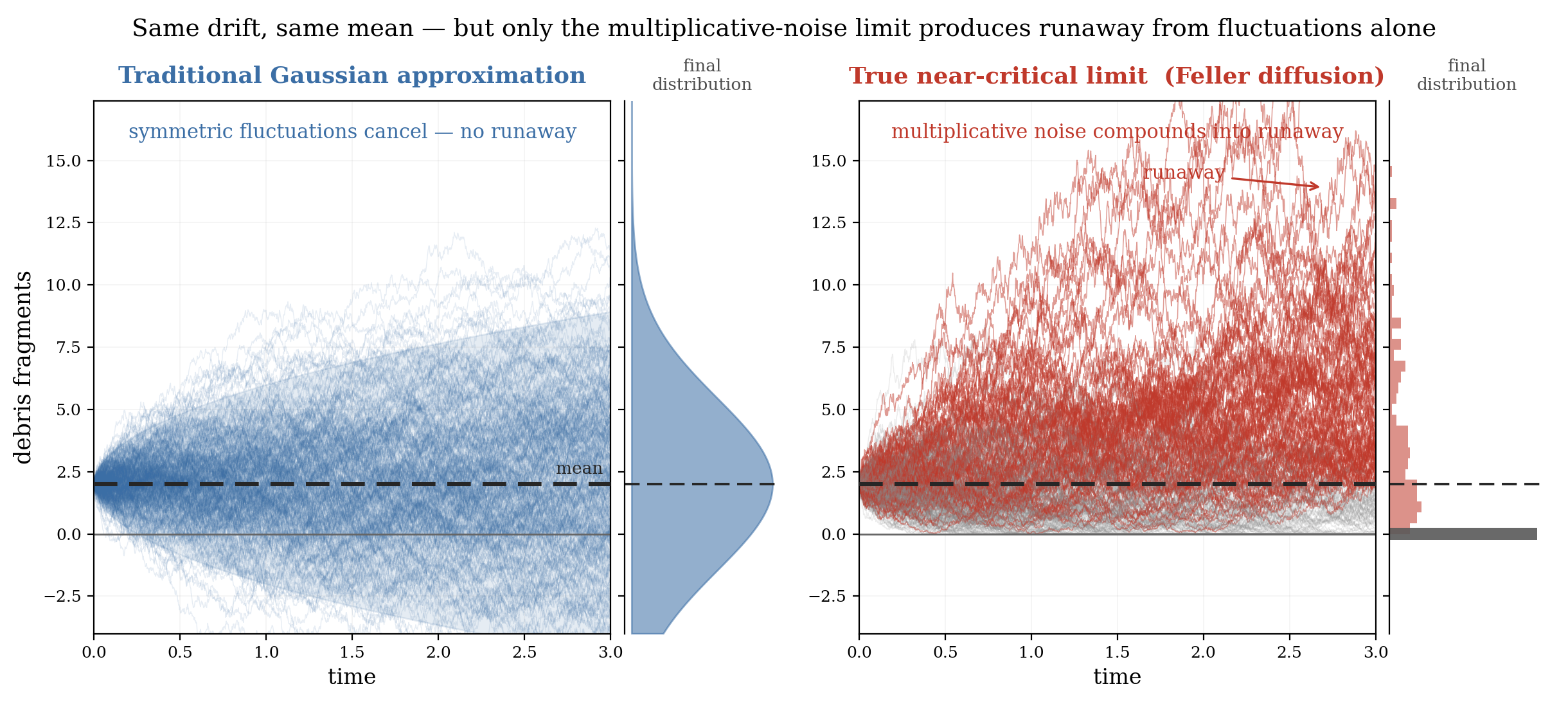}
    \caption{Comparison of Gaussian SDE and Feller Diffusion at the critical Kessler threshold. 
    For the Feller diffusion, the gray sample paths are those which are absorbed at $0$, and the red sample paths are the ones which escape to infinity.}
    \label{fig:gaussian-feller}
\end{figure}

\begin{figure}[t]
\centering
\includegraphics[width=\textwidth]{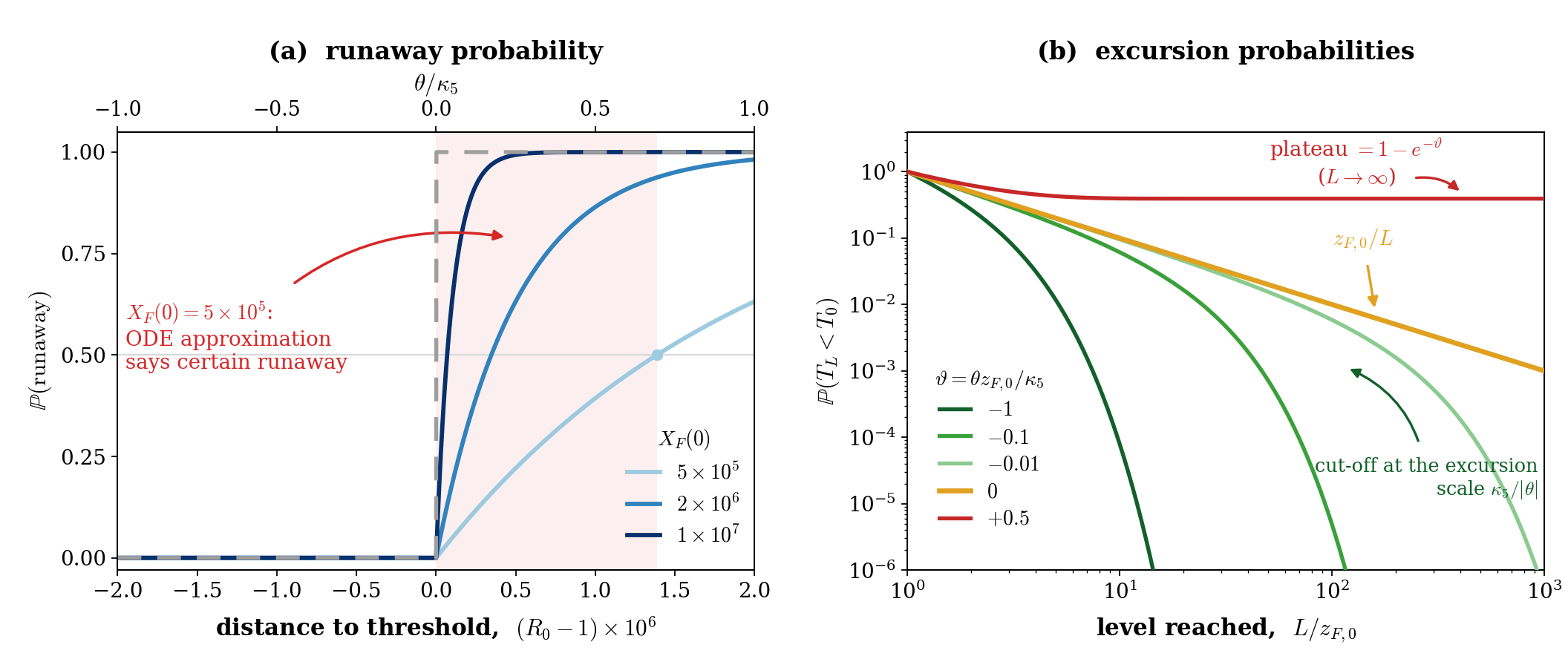}
\caption{(a) Stochastic Kessler runaway probability, against the distance
to the Kessler threshold, for three initial fragment counts with
$N^{\alpha_F}=5\times10^{5}$. The ODE approximation is the dashed step at exact criticality. The shaded region marks where the ODE approximation predicts certain runaway, whereas in the Feller diffusion, runaway is a random event.
(b) Excursion probabilities of the fragment counts in dimensionless form,
$\vartheta:=\theta z_{F,0}/\kappa_5$. At exact criticality the population
reaches any level $L$ with probability $z_{F,0}/L$ on zero drift; below the
threshold the same power law holds up to the excursion scale $\kappa_5/|\theta|$
           and then cuts off; above it the curve plateaus at the survival probability.}
\label{fig:runaway-probability}
\end{figure}


\section{ODE Approximations on the Fast and Intermediate Time-Scales}
\label{sec:fluid-limit}

We use the following notation here and throughout the remainder of this paper.

If $\alpha$ and $\nu$ are vectors, then $\alpha \cdot \nu$ denotes the inner product. Throughout, $z = (z_I, z_F)$ denotes a generic point of $\bR_{\geq 0}^2$, $\|\cdot\|$ is the Euclidean norm, $x \wedge y := \min(x, y)$, and $\probc$ denotes convergence in probability. 
We write $o_p(1)$ (respectively, \ $O_p(1)$) for a term that converges to $0$ in probability (respectively,\ is stochastically bounded), uniformly on $[0, T]$.
For functions $f, g$, we write $f \asymp g$ if there exist constants $C_1, C_2$ with $C_1 \leq \left|\frac{f}{g}\right| \leq C_2$, and $f \lesssim g$ if there exists a constant $C_3$ with $f \leq C_3 g$.
The function $\mathrm{id}$ is the identity function on $\bR$. If $(X_n)_{n \in \bN}$ and $X$ are right-continuous functions with left limits on the Skorokhod space $D_\bR[0, \infty)$, then $X_n \Rightarrow X$ denotes weak convergence in $D_\bR[0, \infty)$.
We also use $\Rightarrow$ to denote weak convergence of random variables.

\subsection{Auxiliary Lemmas}
Fix a clock $\gamma \geq 0$. Let $\zeta_{ki}$ be the $i$-component of the net-change vector $\zeta_k$, so that $(\zeta_{kI})_{k=1}^5 = (1, -2, -1, -1, 0)$ and $(\zeta_{kF})_{k=1}^5 = (0, 2, 1, 0, -1)$, and define the scaled mass-action monomials
\[
\phi_1 \equiv 1, \quad \phi_2(z) = z_I^2, \quad \phi_3(z) = z_I z_F, \quad \phi_4(z) = z_I, \quad \phi_5(z) = z_F .
\]
Using $\rho_k = \beta_k + \alpha\cdot\nu_k$, the rescaled dynamics above can be written, for $i \in \{I, F\}$, as
\[
Z_i^{N,\gamma}(t) = Z_i^N(0) + \sum_{k=1}^5 \zeta_{ki}\Big( N^{\theta_k^{(i)}}\kappa_k \int_0^t \phi_k\big(Z^{N,\gamma}(s)\big)\,ds + M_{ki}^{N,\gamma}(t) \Big) + o_p(1),
\qquad \theta_k^{(i)} := \gamma + \rho_k - \alpha_i,
\]
where the $o_p(1)$ absorbs the discrete correction $Z_I^N - N^{-\alpha_I}$ in $\phi_2$, and
\[
M_{ki}^{N,\gamma}(t) := N^{-\alpha_i}\,\widetilde Y_k\!\Big( N^{\gamma + \rho_k}\kappa_k \int_0^t \phi_k\big(Z^{N,\gamma}(s)\big)\,ds \Big),
\qquad \widetilde Y_k := Y_k - \mathrm{id},
\]
is a martingale with predictable quadratic variation
\[
\big\langle M_{ki}^{N,\gamma} \big\rangle_t = N^{\,\theta_k^{(i)} - \alpha_i}\,\kappa_k \int_0^t \phi_k\big(Z^{N,\gamma}(s)\big)\,ds .
\]
The exponent $\theta_k^{(i)}$ governs whether reaction $k$ contributes to the drift of species $i$ on the clock $\gamma$: the drift term is of order $N^{\theta_k^{(i)}}$ and the associated martingale of order $N^{(\theta_k^{(i)} - \alpha_i)/2}$.

\begin{lemma}\label{lem:poisson}
Let $Y$ be a unit-rate Poisson process and $\widetilde Y = Y - \mathrm{id}$. Let $a \in \bR$, $b \geq 0$, and let $w^N$ be a nondecreasing adapted process with $w^N(t) \leq W t$ on $[0, T]$. Then
\[
\E\Big[ \sup_{t \leq T} \big( N^{-a}\widetilde Y(N^{b} w^N(t)) \big)^2 \Big] \leq 4 W T\, N^{\,b - 2a}.
\]
In particular, if $2a > b$ then $\sup_{t \leq T} \big| N^{-a}\widetilde Y(N^{b} w^N(t)) \big| \probc 0$.
\end{lemma}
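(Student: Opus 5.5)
The plan is to apply Doob's $L^2$ maximal inequality to the time-changed compensated Poisson process. Pulling out the deterministic factor, we have
\[
\E\Big[\sup_{t\le T}\big(N^{-a}\widetilde Y(N^b w^N(t))\big)^2\Big] = N^{-2a}\,\E\Big[\sup_{t\le T}\widetilde Y(N^b w^N(t))^2\Big].
\]
Since $w^N$ is nondecreasing and bounded by $WT$ on $[0,T]$, the set $\{N^b w^N(t): t\le T\}$ is contained in $[0, N^b W T]$. This gives the pathwise domination
\[
\sup_{t\le T}\big|\widetilde Y(N^b w^N(t))\big| \le \sup_{0\le u\le N^bWT}\big|\widetilde Y(u)\big|.
\]
The right-hand side involves only the Poisson process on a deterministic interval. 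This avoids any need to check optional stopping for the random time change, although one could alternatively argue via the martingale $\widetilde Y(N^b w^N(t))$ with respect to the time-changed filtration.

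Next, $\widetilde Y(u)=Y(u)-u$ is a square-integrable martingale in its natural filtration, with $\E[\widetilde Y(u)^2]=u$. Doob's $L^2$ inequality then yields
\[
\E\Big[\sup_{u\le N^bWT}\widetilde Y(u)^2\Big]\le 4\,\E\big[\widetilde Y(N^bWT)^2\big]=4N^bWT.
\]
Multiplying by $N^{-2a}$ gives the bound $4WT\,N^{b-2a}$.

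For the ``in particular'' clause, if $2a>b$ the right-hand side tends to $0$. Chebyshev's inequality, $\pr(\sup|\cdot|>\epsilon)\le \epsilon^{-2}\cdot 4WTN^{b-2a}$, then gives convergence to $0$ in probability.

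The only delicate point is the reduction to a deterministic horizon. This requires the bound $w^N(t)\le Wt$ to hold pathwise (almost surely), which is how we read the hypothesis. If the bound held only up to a stopping time, the same argument would be applied to the stopped process. No other obstacle is expected.
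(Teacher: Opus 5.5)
Your proof is correct, but it takes a different route from the paper's.

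The paper uses optional sampling to assert that $t \mapsto \widetilde Y(N^b w^N(t))$ is itself a martingale. Adaptedness and monotonicity of $w^N$ are what make the times $N^b w^N(t)$ stopping times for the filtration of $Y$. The paper then applies Doob's inequality to this time-changed martingale, obtaining the intermediate bound $4N^{b}\,\E[w^N(T)]$, and only at the end bounds this by $4WTN^b$.

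You instead dominate the supremum pathwise by $\sup_{0\le u\le N^bWT}|\widetilde Y(u)|$ and apply Doob's inequality to $\widetilde Y$ on a deterministic interval. Your route is more elementary and needs less. It does not use adaptedness or monotonicity of $w^N$ at all; it only needs $0\le w^N(t)\le WT$ for $t\le T$, where nonnegativity is implicit in the time change being defined. It also avoids the optional-sampling step. In the paper's setting that step is not entirely trivial: $w^N$ is built from $Z^{N,\gamma}$, hence depends on all the $Y_j$, so the stopping-time property has to be understood in the multiparameter sense of~\cite{ethierkurtz1986}.

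What the paper's route buys in exchange is that its intermediate bound involves only $\E[w^N(T)]$. It therefore survives if the pathwise bound $w^N(t)\le Wt$ is weakened to a bound in expectation. Your argument genuinely needs the pathwise bound, or a localization to a stopping time, as you correctly note.
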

\noindent\emph{Proof.} The process $u \mapsto \widetilde Y(u)$ is a martingale with $\E[\widetilde Y(u)^2] = u$. By optional sampling, $t \mapsto \widetilde Y(N^{b} w^N(t))$ is a martingale, and Doob's $L^2$ maximal inequality gives
\[
\E\Big[ \sup_{t \leq T} \widetilde Y(N^{b} w^N(t))^2 \Big] \leq 4\,\E\big[ \widetilde Y(N^{b} w^N(T))^2 \big] = 4\,\E\big[ N^{b} w^N(T) \big] \leq 4 W T\, N^{b}.
\]
Multiplying by $N^{-2a}$ yields the bound; the convergence follows from Markov's inequality once $b - 2a < 0$. \hfill$\blacksquare$
\medskip

\begin{lemma}\label{lem:apriori}
Under (A1)--(A3), for $\gamma \in \{\gamma_I, \gamma_F\}$, and for every $T < \infty$, the family $\big\{ \sup_{0 \leq t \leq T} \|Z^{N,\gamma}(t)\| \big\}_N$ is tight.
\end{lemma}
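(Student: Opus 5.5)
Since the only source of intacts is the launch reaction~\eqref{eq:rxn1}, and fragments are produced only by removing intacts, the plan is to compare with a monotone dominating process. Let $S^N(t) := X_I^N(t) + X_F^N(t)$. Reaction~\eqref{eq:rxn1} increases $S^N$ by one, reaction~\eqref{eq:rxn2} leaves it unchanged (the net change $(-2,2)$ sums to zero), reaction~\eqref{eq:rxn3} leaves it unchanged, and reactions~\eqref{eq:rxn4},~\eqref{eq:rxn5} decrease it. Hence
\[
X_I^N(t) \leq S^N(t) \leq X_I^N(0) + X_F^N(0) + Y_1(\kappa_1 t), \qquad X_F^N(t) \leq S^N(t).
\]
This pathwise bound drives the whole argument. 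Because $\alpha_F > \alpha_I$, it immediately controls $Z_F^{N,\gamma}$, but it is too crude for $Z_I^{N,\gamma}$, which needs a separate bound at scale $N^{\alpha_I}$.

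\textbf{Step 1 (fragments).} On the clock $\gamma$, we have $Z_F^{N,\gamma}(t) \leq N^{-\alpha_F}\big(X^N_I(0)+X^N_F(0)\big) + N^{-\alpha_F}Y_1(\kappa_1 N^{\gamma}T)$. By (A1)--(A3), which I take to include convergence of $Z^N(0)$, the first term is tight. The second has mean $N^{\gamma + \rho_1 - \alpha_F}\kappa_1 T$, where $\rho_1 = \beta_1$ up to the $O(1)$ normalization of $N^{\beta_1}\kappa_1$. For $\gamma = \gamma_F$ we need $\gamma_F + \rho_1 \leq \alpha_F$, and this is exactly where I would invoke the extra balance constraint~\eqref{eq:extra-balance-constraint}. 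Since $\alpha_F > \alpha_I$ and $\rho_1$ is the maximal $\rho_k$ (by the assumptions $\rho_1 > \rho_2, \rho_3$ and $\rho_1 = \rho_4$, with $\rho_5 = \rho_3$), the constraint gives $\max(\gamma_I,\gamma_F) \leq \alpha_F - \rho_1$. Therefore the mean is $O(1)$, and Markov's inequality gives tightness. The same bound handles $\gamma = \gamma_I$.

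\textbf{Step 2 (intacts).} Here I would use the same domination, but for $X_I$ alone. The only positive jumps of $X_I^N$ come from $Y_1$, so $Z_I^{N,\gamma}(t) \leq Z_I^N(0) + N^{-\alpha_I}Y_1(\kappa_1 N^{\gamma+\rho_1} t)$, where the scaling is now $N^{\gamma + \rho_1 - \alpha_I} = N^{\gamma - \gamma_I}$. For $\gamma = \gamma_I$ this is $O(1)$ and we are done. For $\gamma = \gamma_F > \gamma_I$ this crude bound diverges, so we must use the de-orbit reaction~\eqref{eq:rxn4}. I would dominate $X_I^N$ pathwise by a pure immigration--death process $\tilde X$, with immigration at rate $\kappa_1$ and death at rate $\kappa_4$ per particle, coupled so that $X_I^N \leq \tilde X$ (extra collision removals only decrease $X_I$). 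The process $\tilde X$ is an $M/M/\infty$ queue, and its supremum over a long horizon must be controlled. Under stationarity its marginal is Poisson with mean $N^{\alpha_I}\kappa_1/\kappa_4 \cdot O(1)$, and its supremum over $N^{\gamma_F-\gamma_I}T$ time units (in fast units) exceeds the mean only by a logarithmic-type amount. A cleaner alternative is a Lyapunov/martingale bound: apply Lemma~\ref{lem:poisson} to the centered Poisson terms, and use Gronwall on the linear drift $\kappa_1 - \kappa_4 z$ to show $\sup_t Z_I^{N,\gamma_F}$ is $O_p(1)$, splitting the horizon into $N^{\gamma_F-\gamma_I}$ fast-time blocks.

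\textbf{Main obstacle.} I expect Step 2 for $\gamma = \gamma_F$ to be the main difficulty, because the supremum of the intact process over a horizon of $N^{\gamma_F - \gamma_I}$ fast relaxation times is not controlled by a single-block argument. My first attempt is the $M/M/\infty$ domination with Poisson tail bounds. For an $M/M/\infty$ queue with mean $m = N^{\alpha_I} z_I^\star$, $\pr(\text{exceed } Km)$ decays like $e^{-c\,m K\log K}$. Taking a union bound over polynomially many blocks then gives $\sup_t Z_I^{N,\gamma_F} \leq K$ with high probability, because $m$ grows polynomially in $N$ and so beats the polynomial number of blocks. Combining Steps 1 and 2 yields tightness of $\sup_{t\leq T}\|Z^{N,\gamma}(t)\|$.
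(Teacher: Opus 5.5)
\textbf{Gap in Step~1 on the fragment clock.} Step~1 fails for $\gamma=\gamma_F$. By definition $\gamma_F=\alpha_F-\rho_5$, and (A3) gives $\rho_5=\rho_3<\rho_1$. The exponent in your dominating launch term is therefore
\[
\gamma_F+\rho_1-\alpha_F=\rho_1-\rho_3>0 ,
\]
so $N^{-\alpha_F}Y_1(\kappa_1N^{\gamma_F+\rho_1}T)$ has mean $N^{\rho_1-\rho_3}\kappa_1T\to\infty$. The inequality $\gamma_F\le\alpha_F-\rho_1$ that you read off from \eqref{eq:extra-balance-constraint} would force $\rho_1\le\rho_3$, so it cannot hold together with (A3). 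It is also not among the lemma's hypotheses (A1)--(A3), so invoking it does not close the step. The failure is structural. Over the fragment horizon there are about $N^{\alpha_F+(\rho_1-\rho_3)}\gg N^{\alpha_F}$ launches, and almost all launched intacts leave through the de-orbit reaction~\eqref{eq:rxn4}, which your bound $S^N\le S^N(0)+Y_1$ discards. On the clock $\gamma_I$ your Step~1 is fine, since there the exponent is $\alpha_I-\alpha_F<0$.

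\textbf{How to repair it.} The missing idea is to control the fragments through their own production mechanism once the intacts are controlled; this is the order the paper uses. Run your Step~2 first to get $K$ with $\pr(\sup_{t\le T}Z_I^{N,\gamma_F}(t)>K)$ small, and let $\sigma_K:=\inf\{t:Z_I^{N,\gamma_F}(t)>K\}$. On the $\gamma_F$ clock, $\theta_3^{(F)}=0$ and $\theta_2^{(F)}=\rho_2-\rho_5<0$. Hence up to $\sigma_K$ fragments are created at scaled rate at most $\kappa_3(K+1)Z_F^{N,\gamma_F}+2N^{\theta_2^{(F)}}\kappa_2(K+1)^2$. Now $Z_F^{N,\gamma_F}$ is bounded by $Z_F^N(0)$ plus the nondecreasing scaled production count. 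Take expectations of that count's compensator and apply Gronwall's inequality, after restricting to $\{Z_F^N(0)\le C\}$, since (A1) controls the initial data only in probability. This bounds $\E\sup_{t\le T\wedge\sigma_K}Z_F^{N,\gamma_F}$ by a constant depending only on $C,K,T$, which gives tightness.

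\textbf{Your Step~2.} This step is sound and takes a different route from the paper. The paper compares $Z_I$ with the linear ODE $\dot z=a-cz$ via an integrating factor and estimates the resulting Ornstein--Uhlenbeck-type noise by second moments. Your $M/M/\infty$ domination, with Poisson tails and a union bound over $N^{\gamma_F-\gamma_I}$ blocks, controls the supremum over many relaxation times directly. A pointwise second-moment bound on that non-martingale noise term does not give this by itself.
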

\noindent\emph{Proof.} Fix $L > \|z_0\| + 1$ and let $\tau_L := \inf\{ t : \|Z^{N,\gamma}(t)\| > L \}$; on $[0, \tau_L]$ each $\phi_k$ is bounded by a constant $C_L$. 
We bound the two coordinates in turn. 
In the intact equation the only production is the state-independent arrival rate~\eqref{eq:rxn1} (with $\zeta_{1I} = 1$), while the dynamics~\eqref{eq:rxn2}--\eqref{eq:rxn4} only remove intacts.
Since these removals are nonpositive and $\theta_1^{(I)} = \theta_4^{(I)} =: \Theta$, the intact drift $b^N(z) := \sum_k \zeta_{kI} N^{\theta_k^{(I)}}\kappa_k \phi_k(z)$ satisfies
\[
b^N(z) \leq a - c\, z_I, \qquad a := N^{\Theta}\kappa_1, \quad c := N^{\Theta}\kappa_4, \quad \frac{a}{c} = z_I^\star .
\]
Writing $Z_I^{N,\gamma}(t) = Z_I^N(0) + \int_0^t b^N(Z^{N,\gamma})\,ds + \mathcal M_I(t)$ with $\mathcal M_I := \sum_k \zeta_{kI} M_{kI}^{N,\gamma}$, and applying the integrating factor $e^{ct}$,
\[
Z_I^{N,\gamma}(t) \;\leq\; \Big[ e^{-ct}Z_I^N(0) + z_I^\star\big(1 - e^{-ct}\big)\Big] \;+\; \mathcal N(t), \qquad \mathcal N(t) := e^{-ct}\!\int_0^t e^{cs}\,d\mathcal M_I(s).
\]
The bracketed (deterministic) part is bounded by $\max(Z_I^N(0), z_I^\star)$, uniformly in $N$: the scaled rates $a, c \asymp N^{\Theta}$ enter only through the ratio $a/c = z_I^\star$. 
For the noise, $\langle \mathcal M_I\rangle \asymp N^{\Theta - \alpha_I}(\kappa_1 + \kappa_4 Z_I^{N,\gamma})$ (via dynamics (1), (4)), so 
\[\E\,\mathcal N(t)^2 \lesssim N^{\Theta - \alpha_I}(\kappa_1 + \kappa_4 K_0)/(2c) = O(N^{-\alpha_I}),\]
where $K_0 := \max(\E Z_I^N(0), z_I^\star)$ is the uniform mean bound obtained by taking expectations above. With Doob's inequality this gives $\sup_{t \leq T \wedge \tau_L}|\mathcal N| = O_p(N^{-\alpha_I/2})$, hence
\[
\sup_{t \leq T \wedge \tau_L} Z_I^{N,\gamma} \;\leq\; \max\big(Z_I^N(0), z_I^\star\big) + O_p(N^{-\alpha_I/2})
\]
is tight; call a uniform high-probability bound $K_I$. 
Given $Z_I^{N,\gamma} \leq K_I$, the fragment production is affine in $Z_F$: dynamics~\eqref{eq:rxn3} contributes at rate $\leq \kappa_3 K_I Z_F^{N,\gamma}$ (with coefficient $O(1)$) and dynamics~\eqref{eq:rxn2} at rate $\leq N^{\theta_2^{(F)}}\kappa_2 K_I^2$ with $\theta_2^{(F)} < 0$ (with coefficient $o(1)$). 
Gronwall's inequality applied to $\E \sup_{t \leq T \wedge \tau_L} Z_F^{N,\gamma}$, again using Lemma~\ref{lem:poisson} for the martingale, gives a uniform bound $K_F$. Taking $L > 2(K_I + K_F)$ and applying Markov's inequality forces $\pr(\tau_L \leq T) < \eta$ for large $N$, which is the claim.
\hfill$\blacksquare$
\medskip

We first derive ODE approximations on the fast and intermediate time-scales.
The key result in this section is the identification of the critical Kessler threshold in Corollary~\ref{cor:kessler}.

For the remainder of this paper, we make the following standing assumptions:
\begin{itemize}
    \item[(A1)] \emph{(Initial data.)} $Z^N(0) \probc z_0 = (z_{I,0}, z_{F,0})$, where $z_0$ is an arbitrary point satisfying $z_{I, 0} > 0$ and $z_{F, 0} > 0$.
    \item[(A2)] \emph{(Rate constants.)} The rate constants $\kappa_k$, as well as the abundance exponents $\alpha = (\alpha_I, \alpha_F)$ and the reaction exponents $\rho_k = \beta_k + \alpha\cdot\nu_k$ are held fixed.
    \item[(A3)] \emph{(Balance.)} With the balance conditions above, $\rho_1 = \rho_4 = \max(\rho_1, \rho_2, \rho_3, \rho_4)$ and $\max(\rho_2, \rho_3) < \rho_1$ (intacts); $\rho_3 = \rho_5 = \max(\rho_2, \rho_3, \rho_5)$ and $\rho_2 < \rho_5$ (fragments); and $\alpha_F > \alpha_I > 0$.
\end{itemize}
Under (A3) the natural time-scales $\gamma_I = \alpha_I - \rho_1 \geq 0$ and $\gamma_F = \alpha_F - \rho_5 \geq 0$ satisfy
\[
\gamma_F - \gamma_I = (\alpha_F - \alpha_I) + (\rho_1 - \rho_3) > 0,
\]
so the fragment clock $\gamma_F$ is strictly slower than the intact clock $\gamma_I$. We write \[z_I^\star := \kappa_1/\kappa_4.\]

\begin{theorem}\label{prop:fast}
    Under (A1)--(A3), as $N \to \infty$,
    \[ \sup_{t \leq T} \big| Z_I^{N,\gamma_I}(t) - \bar z_I(t) \big| \probc 0
    \qquad\text{and}\qquad
    \sup_{t \leq T} \big| Z_F^{N,\gamma_I}(t) - z_{F,0} \big| \probc 0, \]
    where $\bar z_I$ is the unique solution of
    \[ \dot{\bar z}_I(t) = \kappa_1 - \kappa_4\, \bar z_I(t), \qquad \bar z_I(0) = z_{I,0}, \]
    namely $\bar z_I(t) = z_I^\star + (z_{I,0} - z_I^\star) e^{-\kappa_4 t}$.
    In particular, the fragments are frozen at their initial value on the intact clock.
\end{theorem}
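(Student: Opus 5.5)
On the intact clock $\gamma=\gamma_I$ we have $\theta_1^{(I)}=\theta_4^{(I)}=\gamma_I+\rho_1-\alpha_I=0$. By (A3), $\rho_2,\rho_3<\rho_1$, so $\theta_2^{(I)},\theta_3^{(I)}<0$. For the fragments, $\theta_k^{(F)}=\gamma_I+\rho_k-\alpha_F$ for $k\in\{2,3,5\}$, and $\max(\rho_2,\rho_3,\rho_5)=\rho_5$. Hence $\theta_k^{(F)}\le \gamma_I-\gamma_F<0$, using $\gamma_F-\gamma_I>0$ from the display preceding the theorem. The proof therefore reduces to three estimates: the vanishing of negative-exponent drifts, the vanishing of all martingales, and a Gronwall comparison for the intact coordinate. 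We localize throughout. By Lemma~\ref{lem:apriori}, given $\eta>0$ there is an $L$ with $\pr(\tau_L\le T)<\eta$ for all large $N$, where $\tau_L$ is the exit time from the ball of radius $L$. On $[0,T\wedge\tau_L]$ every $\phi_k$ is bounded by $C_L$, so each integrator $w^N(t)=\kappa_k\int_0^t\phi_k\,ds$ satisfies $w^N(t)\le Wt$ with $W=\kappa_kC_L$.

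Next we show that all martingales vanish. For $M_{ki}^{N,\gamma_I}$, apply Lemma~\ref{lem:poisson} with $a=\alpha_i$ and $b=\gamma_I+\rho_k$ to the stopped process. The condition $2a>b$ is $\theta_k^{(i)}<\alpha_i$. This holds for the intact reactions, since $\theta_k^{(I)}\le 0<\alpha_I$, and for the fragment reactions, since $\theta_k^{(F)}<0<\alpha_F$. Thus $\sup_{t\le T\wedge\tau_L}|M_{ki}^{N,\gamma_I}|\probc0$. The drifts with negative exponent are bounded by $N^{\theta_k^{(i)}}\kappa_kC_LT\to0$. The discrete correction in $\phi_2$ is already absorbed into the $o_p(1)$ term. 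It follows that $\sup_{t\le T\wedge\tau_L}|Z_F^{N,\gamma_I}(t)-Z_F^N(0)|\probc0$, and (A1) then gives the fragment claim on $\{\tau_L>T\}$. Since $\eta$ is arbitrary, the fragment claim holds.

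For the intacts we write
\[
Z_I^{N,\gamma_I}(t)=Z_I^N(0)+\int_0^t\big(\kappa_1-\kappa_4Z_I^{N,\gamma_I}(s)\big)\,ds+\varepsilon^N(t),
\]
where $\sup_{t\le T\wedge\tau_L}|\varepsilon^N|\probc0$ by the preceding estimates. Subtracting the integral equation for $\bar z_I$ gives
\[
|Z_I^{N,\gamma_I}(t)-\bar z_I(t)|\le|Z_I^N(0)-z_{I,0}|+\sup_{s\le t}|\varepsilon^N(s)|+\kappa_4\int_0^t|Z_I^{N,\gamma_I}(s)-\bar z_I(s)|\,ds .
\]
Gronwall's inequality then yields the bound $e^{\kappa_4T}\big(|Z_I^N(0)-z_{I,0}|+\sup|\varepsilon^N|\big)\probc0$. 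Removing the localization as before completes the intact claim. The explicit formula for $\bar z_I$ is the standard solution of the linear ODE, which is unique because the vector field is Lipschitz.

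The main obstacle is the bookkeeping of the exponents, specifically checking that every reaction other than $1$ and $4$ has a strictly negative $\theta_k^{(i)}$ on the intact clock. For the fragments this relies on $\gamma_F>\gamma_I$, which requires both $\alpha_F>\alpha_I$ and $\rho_3<\rho_1$. A secondary point is to make sure the localization interacts correctly with Lemma~\ref{lem:poisson}. This works because stopping at $\tau_L$ preserves monotonicity and adaptedness of $w^N$ and keeps the bound $w^N(t)\le Wt$, so the lemma applies without modification.
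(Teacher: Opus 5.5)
Your proposal is correct and follows essentially the same route as the paper. You use the same exponent bookkeeping ($\theta_1^{(I)}=\theta_4^{(I)}=0$ and every other relevant exponent negative), Lemma~\ref{lem:poisson} for the martingales, Lemma~\ref{lem:apriori} for the negative-exponent drifts, and Gronwall's inequality for the intact coordinate. The only difference is that you localize explicitly at $\tau_L$, which makes the deterministic bound $w^N(t)\le Wt$ required by Lemma~\ref{lem:poisson} genuinely available; the paper leaves that step implicit.
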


\begin{proof}
On the time-scale $\gamma_I$ we have $\theta_k^{(I)} = \rho_k - \rho_1$, so by (A3) $\theta_1^{(I)} = \theta_4^{(I)} = 0$ while $\theta_2^{(I)}, \theta_3^{(I)} < 0$; and $\theta_k^{(I)} - \alpha_I \leq -\alpha_I < 0$ for every $k$. 
For the fragment coordinate, $\theta_k^{(F)} = \gamma_I + \rho_k - \alpha_F = (\alpha_I - \alpha_F) + (\rho_k - \rho_1)$.
Since $\alpha_I < \alpha_F$ and $\rho_k \leq \rho_1$ for $k \in \{2, 3, 5\}$ (using $\rho_5 = \rho_3 < \rho_1$), we get $\theta_k^{(F)} < 0$; thus $\theta_k^{(F)} - \alpha_F < 0$.

By Lemma~\ref{lem:poisson} every martingale $M_{ki}^{N,\gamma_I}$ tends to $0$ uniformly on $[0, T]$ in probability, and by Lemma~\ref{lem:apriori} the integrals $\int_0^t \phi_k(Z^{N,\gamma_I})\,ds$ are stochastically bounded, so each drift term with $\theta_k^{(i)} < 0$ tends to $0$ as well. The fragment equation therefore collapses to $Z_F^{N,\gamma_I}(t) = Z_F^N(0) + o_p(1)$ uniformly on $[0, T]$, and (A1) gives the second claim.

By the same reasoning, for the intacts, we only need to consider $k = 1, 4$, and we get:
\[
Z_I^{N,\gamma_I}(t) = Z_I^N(0) + \kappa_1 t - \kappa_4 \int_0^t Z_I^{N,\gamma_I}(s)\,ds + \varepsilon_N(t),
\]
where $\sup_{t \leq T} |\varepsilon_N(t)| \probc 0$ collects the vanishing drifts and martingales. 
Subtracting the integral form $\bar z_I(t) = z_{I,0} + \kappa_1 t - \kappa_4 \int_0^t \bar z_I$ and using (A1),
\[
\big| Z_I^{N,\gamma_I}(t) - \bar z_I(t) \big| \leq \delta_N + \kappa_4 \int_0^t \big| Z_I^{N,\gamma_I}(s) - \bar z_I(s) \big|\,ds,
\qquad \delta_N := |Z_I^N(0) - z_{I,0}| + \sup_{t \leq T}|\varepsilon_N(t)| \probc 0 .
\]
Gronwall's inequality gives $\sup_{t \leq T} | Z_I^{N,\gamma_I} - \bar z_I | \leq \delta_N e^{\kappa_4 T} \probc 0$.
\end{proof}

\begin{theorem}\label{prop:slow}
    Under (A1)--(A3), as $N \to \infty$,
    \[ \sup_{t \leq T} \Big| \int_0^t \big( Z_I^{N,\gamma_F}(s) - z_I^\star \big)\,ds \Big| \probc 0
    \qquad\text{and}\qquad
    \sup_{t \leq T} \big| Z_F^{N,\gamma_F}(t) - \bar z_F(t) \big| \probc 0, \]
    where $\bar z_F$ solves the equation
    \[ \dot{\bar z}_F(t) = \big( \kappa_3\, z_I^\star - \kappa_5 \big)\, \bar z_F(t), \qquad \bar z_F(0) = z_{F,0}. \]
    Moreover, $Z_I^{N,\gamma_F}(t) \probc z_I^\star$ for every $t > 0$.
\end{theorem}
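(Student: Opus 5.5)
On the fragment clock we have $\theta_k^{(i)} = \gamma_F + \rho_k - \alpha_i$. With $\gamma_F = \alpha_F - \rho_3$, the fragment coordinate has $\theta_3^{(F)} = \theta_5^{(F)} = 0$ and $\theta_2^{(F)} = \rho_2 - \rho_3 < 0$. For the intact coordinate, $\theta_1^{(I)} = \theta_4^{(I)} = \gamma_F - \gamma_I =: \Gamma > 0$. The plan is to exploit this large prefactor $N^{\Gamma}$ on the intact drift, with the other intact exponents strictly smaller than $\Gamma$.

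\textbf{Step 1: averaging of the intacts.} I divide the intact equation by $N^{\Gamma}$, writing it as
\[
N^{-\Gamma}\big(Z_I^{N,\gamma_F}(t) - Z_I^N(0)\big) = \kappa_1 t - \kappa_4\int_0^t Z_I^{N,\gamma_F}(s)\,ds + R_N(t).
\]
The remainder $R_N$ consists of three pieces:
\begin{itemize}
\item the rescaled drifts of reactions \eqref{eq:rxn2} and \eqref{eq:rxn3}, with exponents $\theta_k^{(I)} - \Gamma = \rho_k - \rho_1 < 0$;
\item the rescaled martingales, whose quadratic variation is of order $N^{\theta_k^{(I)} - \alpha_I - 2\Gamma} \to 0$;
\item the $o_p(1)$ correction absorbed in $\phi_2$.
\end{itemize}
Lemma~\ref{lem:apriori} with $\gamma = \gamma_F$ makes the integrals $\int_0^t \phi_k$ stochastically bounded, and Lemma~\ref{lem:poisson} kills the martingales. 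The left-hand side is also $o_p(1)$ uniformly, again by Lemma~\ref{lem:apriori}. Dividing by $\kappa_4$ gives the first claim, $\sup_{t\le T}\big|\int_0^t (Z_I^{N,\gamma_F} - z_I^\star)\,ds\big| \probc 0$.

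\textbf{Step 2: the fragment ODE.} In the fragment equation, the reaction \eqref{eq:rxn2} drift vanishes since $\theta_2^{(F)} < 0$. All fragment martingales have exponent $\theta_k^{(F)} - \alpha_F \le -\alpha_F < 0$, so they vanish by Lemma~\ref{lem:poisson}. This leaves
\[
Z_F^{N,\gamma_F}(t) = Z_F^N(0) + \kappa_3\int_0^t Z_I^{N,\gamma_F}Z_F^{N,\gamma_F}\,ds - \kappa_5\int_0^t Z_F^{N,\gamma_F}\,ds + o_p(1).
\]
The main obstacle is the product term. $Z_I^{N,\gamma_F}$ converges only in this integrated (weak) sense, not uniformly, so I cannot simply replace it by $z_I^\star$.

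My first attempt is to integrate by parts. Set $A_N(t) := \int_0^t (Z_I^{N,\gamma_F} - z_I^\star)\,ds$, which is $o_p(1)$ uniformly by Step 1. Then
\[
\int_0^t (Z_I - z_I^\star)Z_F\,ds = A_N(t)Z_F(t) - \int_0^t A_N(s^-)\,dZ_F(s).
\]
The boundary term is $o_p(1)$ by tightness of $Z_F$. For the second term, I decompose $dZ_F$ into its drift and martingale parts:
\begin{itemize}
\item the drift has bounded total variation on $[0,T\wedge\tau_L]$, so $\int A_N\,d(\text{drift}) \le \sup|A_N|\cdot O_p(1)$;
\item the martingale part $N^{-\alpha_F}\widetilde Y$ gives a stochastic integral with quadratic variation of order $\sup|A_N|^2 N^{-\alpha_F} \to 0$.
\end{itemize}
Together these yield
\[
Z_F^{N,\gamma_F}(t) = Z_F^N(0) + (\kappa_3 z_I^\star - \kappa_5)\int_0^t Z_F^{N,\gamma_F}\,ds + o_p(1),
\]
and a Gronwall comparison with $\bar z_F$, exactly as in Theorem~\ref{prop:fast}, gives the second claim. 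I will localize with the stopping time $\tau_L$ throughout.

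\textbf{Step 3: pointwise convergence of the intacts.} For fixed $t > 0$ I reuse the integrating-factor representation from Lemma~\ref{lem:apriori}, now with rate $c = N^{\Gamma}\kappa_4 \to \infty$. The deterministic part is
\[
e^{-ct}Z_I^N(0) + z_I^\star(1 - e^{-ct}) \to z_I^\star.
\]
The negative collision contributions carry relative size $N^{\rho_k - \rho_1} \to 0$ after the integrating factor, since they are bounded by $\sup\phi_k \cdot N^{\theta_k^{(I)}}/c$. The noise $\mathcal N(t)$ has variance $O(N^{-\alpha_I})$. Hence $Z_I^{N,\gamma_F}(t) \probc z_I^\star$. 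This is now a two-sided estimate rather than the one-sided bound used in Lemma~\ref{lem:apriori}.
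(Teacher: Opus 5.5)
Your proposal is correct and takes essentially the same route as the paper. You divide the intact equation by the large prefactor $N^{\Gamma}$ (the paper's $N^{\iota}$) to obtain the averaging identity, then handle $\int_0^t (Z_I - z_I^\star) Z_F\,ds$ by integration by parts against $A_N$, followed by a Gronwall comparison. Your Step~3 makes explicit, via the variation-of-constants representation with rate $c = N^{\Gamma}\kappa_4 \to \infty$, the ``exponential stability of the fast flow'' that the paper only invokes for the pointwise claim.
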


\begin{proof}
We proceed in two steps: first we show that the intact variable converges to its steady state, then we prove convergence for the fragment variable.

\emph{Step 1 (fast intact variable).} On the time-scale $\gamma_F$ the intact exponents are $\theta_k^{(I)} = \gamma_F + \rho_k - \alpha_I$. Put $\iota := \gamma_F - \gamma_I = \gamma_F + \rho_1 - \alpha_I > 0$; then $\theta_1^{(I)} = \theta_4^{(I)} = \iota$, while $\theta_2^{(I)}, \theta_3^{(I)} < \iota$ by (A3). Collecting terms, the intact equation reads
\[
Z_I^{N,\gamma_F}(t) = Z_I^N(0) + N^{\iota}\!\int_0^t \big( \kappa_1 - \kappa_4 Z_I^{N,\gamma_F}(s) \big)\,ds + \int_0^t R_N(s)\,ds + \mathcal M_I^N(t),
\]
where
\[
R_N(s) := -2N^{\theta_2^{(I)}}\kappa_2\,\big(Z_I^{N,\gamma_F}(s)\big)^2 - N^{\theta_3^{(I)}}\kappa_3\, Z_I^{N,\gamma_F}(s)\, Z_F^{N,\gamma_F}(s)
\]
collects the dynamics~\eqref{eq:rxn2} and~\eqref{eq:rxn3}.
By Lemma~\ref{lem:apriori}, $\sup_{t\le T}|R_N| = O_p(N^{\iota'})$ with $\iota' := \max(\theta_2^{(I)}, \theta_3^{(I)}) < \iota$, and $\mathcal M_I^N = \sum_k \zeta_{kI} M_{kI}^{N,\gamma_F}$ has quadratic variation $O_p(N^{\iota - \alpha_I})$.
Recall that the dominant contribution comes from dynamics~\eqref{eq:rxn1} and~\eqref{eq:rxn4}. 
Re-arranging, we get:
\begin{equation}
    \kappa_4 \int_0^t \big( Z_I^{N,\gamma_F}(s) - z_I^\star \big)\,ds
    = N^{-\iota}\Big[ -\big( Z_I^{N,\gamma_F}(t) - Z_I^N(0) \big) + \int_0^t R_N(s)\,ds + \mathcal M_I^N(t) \Big].
    \label{eq:slow-ODE-intact}
\end{equation}
The three terms in the bracket are $O_p(1)$ (by Lemma~\ref{lem:apriori}), $O_p(N^{\iota'})$, and $O_p(N^{(\iota - \alpha_I)/2})$ respectively; all tend to zero when multiplied by $N^{-\iota}$ since $\iota' < \iota$ and $\alpha_I > 0$. 
This proves the first display. 
The pointwise statement $Z_I^{N,\gamma_F}(t) \probc z_I^\star$ for $t > 0$ then follows from the exponential stability of the fast flow $\dot\zeta = N^{\iota}(\kappa_1 - \kappa_4 \zeta)$ whose contraction rate $N^{\iota}\kappa_4 \to \infty$.
Here, $Z_I^{N, \gamma_F}$ is the dominant term for the right hand side in Equation~\eqref{eq:slow-ODE-intact}, and $\zeta$ is the dynamics on the left hand side.

\emph{Step 2 (intermediate fragment variable).} On the same time-scale the fragment exponents are $\theta_k^{(F)} = \rho_k - \rho_5$, so $\theta_3^{(F)} = \theta_5^{(F)} = 0$, $\theta_2^{(F)} < 0$, and $\theta_k^{(F)} - \alpha_F \leq -\alpha_F < 0$. 
By Lemma~\ref{lem:poisson} and Lemma~\ref{lem:apriori}, the drift due to dynamics~\eqref{eq:rxn2} and all fragment martingales vanish uniformly, leaving
\[
Z_F^{N,\gamma_F}(t) = Z_F^N(0) + \kappa_3 \int_0^t Z_I^{N,\gamma_F}(s) Z_F^{N,\gamma_F}(s)\,ds - \kappa_5 \int_0^t Z_F^{N,\gamma_F}(s)\,ds + o_p(1).
\]
Write $Z_I Z_F = z_I^\star Z_F + (Z_I - z_I^\star) Z_F$ and set $A_N(t) := \int_0^t (Z_I^{N,\gamma_F} - z_I^\star)\,ds$, so $\sup_{t\le T}|A_N| \probc 0$ by Step~1. Integration by parts gives
\[
\int_0^t (Z_I^{N,\gamma_F} - z_I^\star) Z_F^{N,\gamma_F}\,ds
= A_N(t) Z_F^{N,\gamma_F}(t) - \int_0^t A_N(s)\,dZ_F^{N,\gamma_F}(s).
\]
Since $Z_F^{N,\gamma_F}$ is stochastically bounded, with drift of bounded variation plus a martingale of quadratic variation $O_p(N^{-\alpha_F})$, both terms on the right are $O_p(\sup_{t\le T}|A_N|) \probc 0$. 
Hence
\[
Z_F^{N,\gamma_F}(t) = Z_F^N(0) + \big( \kappa_3 z_I^\star - \kappa_5 \big) \int_0^t Z_F^{N,\gamma_F}(s)\,ds + e_N(t),
\qquad \sup_{t \leq T}|e_N| \probc 0.
\]
Comparing with the integral form of the averaged ODE and applying Gronwall's inequality yields \[\sup_{t \leq T} | Z_F^{N,\gamma_F} - \bar z_F | \probc 0,\]
which establishes the result.
\end{proof}

\begin{corollary}[Critical Kessler Threshold]\label{cor:kessler}
For the $\gamma_F$ time-scale in Theorem~\ref{prop:slow}, the fragment count tends to infinity with time if and only if
\[
z_I^\star = \frac{\kappa_1}{\kappa_4} > \frac{\kappa_5}{\kappa_3},
\]

and fragment count tends to zero if and only if
\[z_I^\star < \frac{\kappa_5}{\kappa_3}.\]

\end{corollary}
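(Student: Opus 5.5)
The plan is to read the corollary as a statement about the limiting ODE in Theorem~\ref{prop:slow}. On the $\gamma_F$ clock, that theorem identifies the fragment fluid limit $\bar z_F$ as the solution of the scalar linear equation $\dot{\bar z}_F = (\kappa_3 z_I^\star - \kappa_5)\bar z_F$ with $\bar z_F(0) = z_{F,0}$. Here ``the fragment count tends to infinity (resp.\ zero) with time'' is understood as $\bar z_F(t) \to \infty$ (resp.\ $\to 0$) as $t \to \infty$. The first step is to solve this equation explicitly:
\[
\bar z_F(t) = z_{F,0}\, e^{(\kappa_3 z_I^\star - \kappa_5)t}.
\]
This uses only that the coefficient $\kappa_3 z_I^\star - \kappa_5$ is a constant, which holds by (A2), and that solutions of a linear ODE are unique.

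The second step is a three-way case analysis on the sign of $\lambda := \kappa_3 z_I^\star - \kappa_5$, using $z_{F,0} > 0$ from (A1).
\begin{itemize}
\item If $\lambda > 0$, then $\bar z_F(t) \to \infty$.
\item If $\lambda < 0$, then $\bar z_F(t) \to 0$.
\item If $\lambda = 0$, then $\bar z_F \equiv z_{F,0} \in (0,\infty)$, so neither limit occurs.
\end{itemize}
Since the three cases are exhaustive and mutually exclusive, both implications follow. Growth to infinity forces $\lambda > 0$, because the other two cases give a bounded limit. Decay to zero forces $\lambda < 0$, because the other two cases keep $\bar z_F$ bounded below by $z_{F,0} > 0$. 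Finally, since $\kappa_3 > 0$, the condition $\lambda > 0$ is equivalent to $z_I^\star = \kappa_1/\kappa_4 > \kappa_5/\kappa_3$, and $\lambda < 0$ is equivalent to $z_I^\star < \kappa_5/\kappa_3$. This gives both displayed criteria.

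The only real subtlety is interpretation rather than computation. Theorem~\ref{prop:slow} gives convergence uniformly on compacts $[0,T]$, not over an infinite horizon, so the ``tends to infinity with time'' statement must refer to the limiting ODE and not to the prelimit process $Z_F^{N,\gamma_F}$. I will say this explicitly, and I will add the finite-horizon consequence that makes it meaningful for the chain. For any level $L$, choose $T$ with $\bar z_F(T) > L$ when $\lambda > 0$, or with $\bar z_F(T) < L^{-1}$ when $\lambda < 0$. Theorem~\ref{prop:slow} then gives $\pr\big(Z_F^{N,\gamma_F}(T) > L\big) \to 1$, and respectively $\pr\big(Z_F^{N,\gamma_F}(T) < L^{-1}\big) \to 1$. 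I do not expect any obstacle beyond stating this interpretation carefully.
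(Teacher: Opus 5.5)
Your proposal is correct and matches the paper's argument, which is left implicit there: the corollary is read off from the explicit solution $\bar z_F(t) = z_{F,0}e^{(\kappa_3 z_I^\star - \kappa_5)t}$ of the limiting ODE in Theorem~\ref{prop:slow}, and the sign of $\kappa_3 z_I^\star - \kappa_5$ decides between growth and decay. You also note that the statement concerns the limiting ODE, since Theorem~\ref{prop:slow} only gives convergence on compacts $[0,T]$, and you derive the finite-horizon consequence for $Z_F^{N,\gamma_F}$; both are correct and worthwhile clarifications that the paper leaves unstated.
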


\begin{remark}
    It is useful to consolidate the critical Kessler threshold in a dimensionless form. In particular, note that each fragment collides with intacts at rate $\kappa_3 z_I^\star$ and de-orbits at rate $\kappa_5$. Therefore, over its lifetime a single fragment produces 
    \[
        R_0 := \frac{\kappa_3 z_I^\star}{\kappa_5}
    \]
    new fragments.
    Thus, Corollary~\ref{cor:kessler} is precisely the statement that the \emph{reproduction number} $R_0 > 1$, and the fragment population grows precisely when each fragment more than replaces itself. Furthermore, the growth rate for the fluid approximation $\bar{z}_F$ can be expressed as $\Delta = \kappa_5 (R_0 - 1)$.
\end{remark}

\begin{remark}
    \label{rem:B-W}
    When $\alpha_I = \alpha_F = 1$, and $\beta_1 = \ldots = \beta_5 = 0$, we recover the differential equations in the style of Bradley and Wein~\cite{bradley2009space}:
    \begin{align*}
        \dot{i} &= \kappa_1 - 2\kappa_2 i^2 - \kappa_3 if - \kappa_4 i, 
        \\
        \dot{f} &= 2\kappa_2 i^2 + \kappa_3 if - \kappa_5 f,
    \end{align*}
    where $i,f$ correspond to intacts and fragments, respectively. Here, the term ``in the style of'' indicates that the form of the equations is the same, but the coefficients may differ.
\end{remark}

\begin{remark}
The hypothesis $\alpha_I, \alpha_F > 0$ places both species in the \emph{fluid} regime, where the surviving martingales vanish and the limits are deterministic. If instead an abundance exponent equals $0$, the quadratic variation at the balanced reactions is $O(1)$ rather than $o(1)$, and the limit on that clock retains its jumps: one obtains a pure-jump Markov process, whose generator is read off from the surviving reactions, rather than an ODE. The averaging identity of Theorem~\ref{prop:slow} then persists with $z_I^\star$ replaced by the mean of the fast intact process under its quasi-stationary law (see~\cite{kang2013separation}).
\end{remark}

\begin{remark}[Reversed abundances and the sign of $\iota$]
\label{rem:reversed}
Recall that $\alpha_F > \alpha_I$ by assumption.
In this remark, we discuss our results in further generality.

The qualitative behavior of the fluid limit is governed not by the ordering of $\alpha_I$ and $\alpha_F$ directly, but by the sign of
\[
\iota \;=\; \gamma_F - \gamma_I \;=\; (\alpha_F - \alpha_I) + (\rho_1 - \rho_3),
\]
in which the abundance gap $\alpha_F - \alpha_I$ competes with the (positive) rate gap $\rho_1 - \rho_3 > 0$ fixed by the balance condition $\beta_1 > \beta_3 + \alpha_I + \alpha_F$. When $\alpha_I > \alpha_F$, three regimes arise.

\emph{(i) Mild reversal, $0 < \alpha_I - \alpha_F < \rho_1 - \rho_3$ (so $\iota > 0$).} The proofs of Theorems~\ref{prop:fast}--\ref{prop:slow} use only $\iota > 0$ and $\alpha_I, \alpha_F > 0$. 
Hence, fragments remain the slow (averaged) species, the intacts relax to $z_I^\star = \kappa_1/\kappa_4$, and the averaged equation and Kessler threshold of Corollary~\ref{cor:kessler} are unchanged.

\emph{(ii) Critical case, $\alpha_I - \alpha_F = \rho_1 - \rho_3$ (so $\iota = 0$, $\gamma_I = \gamma_F$).} The time-scale separation collapses: both species have $O(1)$ drift on a single clock, and there is no averaging. The fluid limit is the coupled system
\[
\dot z_I = \kappa_1 - \kappa_4 z_I, \qquad \dot z_F = (\kappa_3 z_I - \kappa_5)\, z_F .
\]

\emph{(iii) Strong reversal, $\alpha_I - \alpha_F > \rho_1 - \rho_3$ (so $\iota < 0$).} Formally the roles swap --- intacts slow, fragments fast --- but the fast fragment subsystem has no stabilizing equilibrium. On the fast clock the intact-intact collision dynamics~\eqref{eq:rxn2} is subdominant, leaving the homogeneous, autocatalytic dynamics $\dot z_F = (\kappa_3 z_I - \kappa_5) z_F$: its quasi-steady state is $z_F \equiv 0$ when $\kappa_3 z_I < \kappa_5$ --- which drives the fragments below order $N^{\alpha_F}$ and contradicts the balance condition --- and does not exist when $\kappa_3 z_I > \kappa_5$. The averaging principle is therefore degenerate, and the balance condition is self-consistent only when $\iota \geq 0$. Structurally, the assumption $\alpha_F > \alpha_I$ places the stable, mean-reverting species (intacts, with constant launch immigration and linear de-orbit removal) on the fast clock, and the unstable, autocatalytic species (fragments) on the slow clock; this is precisely what yields a well-posed averaged limit and places the Kessler instability on an observable slow time-scale.
The arguments are similar to those used to prove Theorems~\ref{prop:fast} and~\ref{prop:slow}, and we omit the details for the sake of brevity.
\end{remark}


\section{SDE Approximations on the Fast and Intermediate Time-Scales}
\label{sec:diffusion-limit}

In this section, we derive Gaussian SDE approximations to study stochastic fluctuations around the ODE approximations of the last subsection.
Our major point is that these approximations here are insufficient to truly understand the inherent randomness in the collision dynamics; instead one needs to look at the Feller diffusion in the next section.
Nevertheless, we derive the Gaussian SDE approximations in order to make that point precise.

The Gaussian fluctuations around the fluid limits of Theorems~\ref{prop:fast} and~\ref{prop:slow} are in the spirit of the multiple time-scale central limit theorems of~\cite{kang2014central}. 
For $i = I, F$, the correct normalization is $N^{\alpha_i/2}$ because at a balanced reaction ($\theta_k^{(i)} = 0$) the martingale $M_{ki}^{N,\gamma}$ is of order $N^{-\alpha_i/2}$. 

We strengthen (A1) to

\smallskip
\noindent(A1') \emph{(Initial fluctuations.)} There exist random variables $v_0, u_0$ with \[N^{\alpha_I/2}\big(Z_I^N(0) - \bar z_I^N(0)\big) \Rightarrow v_0, \qquad N^{\alpha_F/2}\big(Z_F^N(0) - \bar z_F^N(0)\big) \Rightarrow u_0,\]
where $\bar z^N = (\bar z_I^N, \bar z_F^N)$ solves the finite-$N$ scaled ODE $\dot{\bar z}^N = F^N(\bar z^N)$.
Here, $F^N$ is the drift field of the time-change representation, with $\bar z^N(0) \to z_0$.
Recall $\bar z^N \to \bar z$ uniformly on compacts, so centering at $\bar z^N$ or at the limit $\bar z$ agree whenever the subdominant reactions are separated by more than $\alpha_i/2$, and the finite-$N$ centering removes the deterministic bias otherwise. 

\begin{theorem}\label{prop:clt-fast}
    Assume (A1)--(A3) and (A1'). Then
    \[
    V^N := N^{\alpha_I/2}\big(Z_I^{N,\gamma_I} - \bar z_I^N\big) \;\Rightarrow\; V,
    \]
    the unique solution of the SDE
    \[
    dV(t) = -\kappa_4\, V(t)\,dt + \sqrt{\kappa_1 + \kappa_4\,\bar z_I(t)}\;dB(t), \qquad V(0) = v_0 .
    \]
    In particular $V$ is Gaussian; its variance $v(t) = \operatorname{Var} V(t)$ solves $\dot v = -2\kappa_4 v + \kappa_1 + \kappa_4 \bar z_I$, so $v(t) \to z_I^\star$ as $t \to \infty$.
\end{theorem}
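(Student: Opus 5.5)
We follow the standard functional CLT route for density-dependent chains in the style of~\cite{kang2014central}, working on the intact clock $\gamma_I$. We write $Z_I^{N,\gamma_I}(t) = Z_I^N(0) + \int_0^t F_I^N(Z^{N,\gamma_I}(s))\,ds + \mathcal M_I^N(t)$, where $F_I^N(z) = \kappa_1 - \kappa_4 z_I - 2N^{\theta_2^{(I)}}\kappa_2 z_I^2 - N^{\theta_3^{(I)}}\kappa_3 z_I z_F$ (plus the $o_p$ discrete correction) and $\mathcal M_I^N = \sum_k \zeta_{kI} M_{kI}^{N,\gamma_I}$. The finite-$N$ centering satisfies $\bar z_I^N(t) = \bar z_I^N(0) + \int_0^t F_I^N(\bar z^N(s))\,ds$. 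Subtracting the two and multiplying by $N^{\alpha_I/2}$ gives
\[
V^N(t) = V^N(0) + \int_0^t N^{\alpha_I/2}\big(F_I^N(Z^{N,\gamma_I}) - F_I^N(\bar z^N)\big)\,ds + N^{\alpha_I/2}\mathcal M_I^N(t).
\]
Thus $V^N$ is $V^N(0)$, plus a drift that is linear in $V^N$ up to error terms, plus a rescaled martingale. The plan has three steps: (i) a martingale FCLT for the noise, (ii) linearization of the drift, and (iii) continuous mapping through a linear integral equation.

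\emph{Step (i).} The rescaled martingale $N^{\alpha_I/2}M_{kI}^{N,\gamma_I}$ has predictable quadratic variation $N^{\theta_k^{(I)}}\kappa_k\int_0^t\phi_k$. For $k=1,4$ we have $\theta_k^{(I)} = 0$, and by Theorem~\ref{prop:fast} the quadratic variations converge to $\kappa_1 t$ and $\kappa_4\int_0^t \bar z_I$. For $k=2,3$ we have $\theta_k^{(I)}<0$, so these terms vanish by Lemma~\ref{lem:poisson} together with Lemma~\ref{lem:apriori}. The jumps are of size $N^{-\alpha_I/2}\to 0$. The martingale FCLT (Ethier--Kurtz Thm.~7.1.4) then yields $N^{\alpha_I/2}\mathcal M_I^N \Rightarrow \int_0^\cdot \sqrt{\kappa_1+\kappa_4\bar z_I(s)}\,dB(s)$.

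\emph{Step (ii), the main obstacle.} The $k=4$ part of the drift difference is exactly $-\kappa_4 V^N$. The $k=2$ part is $-2N^{\theta_2^{(I)}}\kappa_2 (Z_I+\bar z_I^N)V^N$, which has a vanishing coefficient and so is $o(1)\cdot V^N$. The delicate term is $k=3$: it equals $-N^{\theta_3^{(I)}}\kappa_3\big(Z_F V^N + \bar z_I^N\, N^{\alpha_I/2}(Z_F^{N,\gamma_I}-\bar z_F^N)\big)$, and it couples in the fragment fluctuation. We will bound $N^{\alpha_I/2}(Z_F-\bar z_F^N)$ on the intact clock. Its martingale part has variance of order $N^{\alpha_I-\alpha_F+\theta^{(F)}}$, which vanishes because $\theta^{(F)}<0$ and $\alpha_F>\alpha_I$. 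Its drift part is controlled by Gronwall together with the same linear structure. We then show this term is $O_p(1)$ times a constant, and after multiplying by $N^{\theta_3^{(I)}}\to 0$ it is negligible. The precise claim is that $\sup_{t\le T}N^{\alpha_I/2}|Z_F^{N,\gamma_I}-\bar z_F^N|$ is stochastically bounded; this is where (A3) enters in the form $\alpha_F>\alpha_I$. The finite-$N$ centering in (A1') is what removes any deterministic bias coming from the subdominant reactions~\eqref{eq:rxn2} and~\eqref{eq:rxn3}.

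\emph{Step (iii).} After steps (i) and (ii), $V^N(t) = V^N(0) - \kappa_4\int_0^t V^N\,ds + W^N(t) + \varepsilon_N(t)$, where $\sup_{t\le T}|\varepsilon_N|\probc 0$ and $W^N$ is the rescaled martingale of step (i). The solution map $(v_0,w)\mapsto v$ of $v(t)=v_0-\kappa_4\int_0^t v+w(t)$ is continuous on $D_\bR[0,\infty)$; it is given explicitly by $v(t)=e^{-\kappa_4 t}v_0+w(t)-\kappa_4\int_0^t e^{-\kappa_4(t-s)}w(s)\,ds$. Using the joint convergence of $(V^N(0),W^N)$ (taking $v_0$ independent of $B$) and the continuous mapping theorem, we obtain $V^N\Rightarrow V$. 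Uniqueness of $V$ holds because the SDE is linear with Lipschitz coefficients.

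Finally, $V$ is a Gaussian process because it is a Wiener integral with a deterministic integrand, added to an independent initial term; this makes it Gaussian whenever $v_0$ is Gaussian or deterministic. Itô's formula gives $\dot v = -2\kappa_4 v + \kappa_1+\kappa_4\bar z_I$. Since $\bar z_I\to z_I^\star$, the variance satisfies $v(t)\to(\kappa_1+\kappa_4 z_I^\star)/(2\kappa_4) = z_I^\star$.
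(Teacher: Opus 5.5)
Your proposal is correct and follows essentially the same route as the paper: subtract the finite-$N$ ODE, linearize the drift so that only $-\kappa_4 V^N$ survives, apply the martingale FCLT with quadratic variation coming from dynamics~\eqref{eq:rxn1} and~\eqref{eq:rxn4}, and conclude by continuous mapping through the linear integral equation. Two of your points are slightly more careful than the paper's brief treatment. You use an exact product decomposition of the $k=3$ term together with an explicit stochastic bound on $N^{\alpha_I/2}(Z_F^{N,\gamma_I}-\bar z_F^N)$. You also note that $V$ is Gaussian only when $v_0$ is Gaussian (or deterministic) and independent of $B$.
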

\begin{proof}
Let $F_I^N(z) = \kappa_1 - \kappa_4 z_I - 2N^{\theta_2^{(I)}}\kappa_2 z_I^2 - N^{\theta_3^{(I)}}\kappa_3 z_I z_F$ be the intact drift on the clock $\gamma_I$, with $\theta_2^{(I)}, \theta_3^{(I)} < 0$. Subtracting $\dot{\bar z}_I^N = F_I^N(\bar z^N)$ from the basic representation and multiplying by $N^{\alpha_I/2}$,
\[
V^N(t) = V^N(0) + \int_0^t N^{\alpha_I/2}\big[ F_I^N(Z^{N,\gamma_I}(s)) - F_I^N(\bar z^N(s)) \big]\,ds + W^N(t),
\qquad W^N := N^{\alpha_I/2}\sum_k \zeta_{kI} M_{kI}^{N,\gamma_I}.
\]
A first-order Taylor expansion gives $N^{\alpha_I/2}\big[F_I^N(Z^{N,\gamma_I}) - F_I^N(\bar z^N)\big] = \partial_{z_I}F_I^N(\bar z^N)\,V^N + \varepsilon^N$, where
\[
\partial_{z_I}F_I^N(\bar z^N) = -\kappa_4 - 4N^{\theta_2^{(I)}}\kappa_2 \bar z_I^N - N^{\theta_3^{(I)}}\kappa_3 \bar z_F^N \longrightarrow -\kappa_4,
\]
and $\sup_{t \leq T}|\varepsilon^N| \probc 0$: it gathers the vanishing coupling to the (intermediate) fragment fluctuation, carrying the prefactor $N^{\theta_3^{(I)}}\kappa_3\bar z_I^N \to 0$, and the quadratic Taylor remainder is $O(N^{-\alpha_I/2}(V^N)^2)$ times bounded second derivatives, controlled by Lemma~\ref{lem:apriori}. 
Hence
\[
V^N(t) = V^N(0) - \kappa_4 \int_0^t V^N(s)\,ds + W^N(t) + \widetilde\varepsilon^N(t), \qquad \sup_{t \leq T}|\widetilde\varepsilon^N| \probc 0 .
\]
The martingale $W^N$ has
\[
\langle W^N \rangle_t = N^{\alpha_I}\sum_k \zeta_{kI}^2 \langle M_{kI}^{N,\gamma_I}\rangle_t = \sum_k \zeta_{kI}^2 N^{\theta_k^{(I)}}\kappa_k \int_0^t \phi_k(Z^{N,\gamma_I})\,ds \;\probc\; \int_0^t \big(\kappa_1 + \kappa_4 \bar z_I(s)\big)\,ds,
\]
because only dynamics~\eqref{eq:rxn1} and~\eqref{eq:rxn4} (with $\theta_k^{(I)} = 0$, $\zeta_{kI}^2 = 1$, $\phi_1 = 1$, $\phi_4 \to \bar z_I$) survive.
Dynamics~\eqref{eq:rxn2} and~\eqref{eq:rxn3} have $N^{\theta_k^{(I)}} \to 0$; the jumps of $W^N$ have size $N^{\alpha_I/2}\cdot N^{-\alpha_I} = N^{-\alpha_I/2} \to 0$. 
The martingale CLT yields $W^N \Rightarrow \int_0^\cdot \sqrt{\kappa_1 + \kappa_4 \bar z_I}\,dB$. Since the map sending $(V^N(0), W^N, \widetilde\varepsilon^N)$ to the solution of the linear equation is continuous (via Gronwall's inequality), the continuous-mapping theorem gives $V^N \Rightarrow V$.
The variance ODE is the standard computation for a linear SDE, and $\bar z_I(t) \to z_I^\star$ gives $v(\infty) = (\kappa_1 + \kappa_4 z_I^\star)/(2\kappa_4) = z_I^\star$. \end{proof}

\begin{theorem}\label{prop:clt-slow}
    Assume (A1)--(A3) and (A1'), together with the strict gap $\rho_1 > \rho_3$. Then
    \[
    U^N := N^{\alpha_F/2}\big(Z_F^{N,\gamma_F} - \bar z_F^N\big) \;\Rightarrow\; U,
    \]
    the unique solution of the linear SDE
    \[
    dU(t) = \big(\kappa_3 z_I^\star - \kappa_5\big) U(t)\,dt + \sqrt{\big(\kappa_3 z_I^\star + \kappa_5\big)\bar z_F(t)}\;dB(t), \qquad U(0) = u_0 .
    \]
\end{theorem}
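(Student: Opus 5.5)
The plan is to copy the structure of the proof of Theorem~\ref{prop:clt-fast}, now on the clock $\gamma_F$ with normalization $N^{\alpha_F/2}$. The one new ingredient is control of the coupling to the fast intact fluctuations, which I would handle with the averaging identity~\eqref{eq:slow-ODE-intact} from Step~1 of Theorem~\ref{prop:slow}.

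\emph{Decomposition.} Write the fragment drift on the clock $\gamma_F$ as
\[
F_F^N(z) = 2N^{\theta_2^{(F)}}\kappa_2 z_I^2 + \kappa_3 z_I z_F - \kappa_5 z_F,
\]
with $\theta_2^{(F)}<0$. Subtracting the finite-$N$ ODE and multiplying by $N^{\alpha_F/2}$ gives
\[
U^N(t) = U^N(0) + \int_0^t N^{\alpha_F/2}\big[F_F^N(Z^{N,\gamma_F}) - F_F^N(\bar z^N)\big]\,ds + W_F^N(t),
\qquad W_F^N := N^{\alpha_F/2}\sum_k \zeta_{kF} M_{kF}^{N,\gamma_F}.
\]
I would split the drift difference into three parts.
\begin{itemize}
\item The $z_F$-linear part: $(\kappa_3 \bar z_I^N - \kappa_5)\,U^N$ plus $\kappa_3 (Z_I - \bar z_I^N)\,U^N$.
\item The intact coupling term: $\kappa_3 N^{\alpha_F/2}\int_0^t (Z_I^{N,\gamma_F} - \bar z_I^N)\,\bar z_F^N\,ds$.
\item The contribution of dynamics~\eqref{eq:rxn2}, which carries the prefactor $N^{\theta_2^{(F)}}$ and is linearized around $\bar z^N$. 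Whatever bias remains there is absorbed by the finite-$N$ centering in (A1').
\end{itemize}
Since $\bar z_I^N \to z_I^\star$ for $t>0$, and $\int (Z_I - \bar z_I^N)\,U^N\,ds \to 0$ by the same integration-by-parts argument as in Step~2 of Theorem~\ref{prop:slow}, the first part gives $(\kappa_3 z_I^\star - \kappa_5)\int_0^t U^N\,ds + o_p(1)$. Tightness of $U^N$ is needed here; I would get it a priori from Gronwall together with Doob's inequality on $W_F^N$.

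\emph{Coupling term (main obstacle).} The intact fluctuation $Z_I - \bar z_I^N$ is of size $N^{-\alpha_I/2}$, and $N^{(\alpha_F - \alpha_I)/2}\to\infty$, so a naive bound fails. I would instead use the fast mean reversion. Subtract from~\eqref{eq:slow-ODE-intact} its deterministic analogue for $\bar z^N$. Then $\kappa_4\int_0^t (Z_I^{N,\gamma_F} - \bar z_I^N)\,ds$ equals $N^{-\iota}$ times the sum of:
\begin{itemize}
\item a boundary term $O_p(N^{-\alpha_I/2})$;
\item the difference of the $R_N$ terms, which is Lipschitz in the state and so is $O_p(N^{\iota'}\cdot N^{-\alpha_I/2})$ plus a bias already handled by the centering;
\item the intact martingale, which is $O_p(N^{(\iota-\alpha_I)/2})$.
\end{itemize}
Multiplying by $N^{\alpha_F/2}$, with $\iota = (\alpha_F - \alpha_I) + (\rho_1 - \rho_3)$, the three exponents are
\[
\tfrac{\alpha_F - \alpha_I}{2} - \iota < 0,
\qquad
\tfrac{\alpha_F - \alpha_I}{2} + \iota' - \iota < 0,
\qquad
\tfrac{\alpha_F - \alpha_I - \iota}{2} = -\tfrac{\rho_1-\rho_3}{2} < 0 .
\]
The last of these is exactly where the strict gap $\rho_1 > \rho_3$ is used. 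The weight $\bar z_F^N$ is smooth, so an integration by parts transfers these bounds from $\int (Z_I - \bar z_I^N)\,ds$ to the weighted integral. Hence the coupling term is $o_p(1)$ uniformly on $[0,T]$.

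\emph{Martingale and conclusion.} The quadratic variation is
\[
\langle W_F^N\rangle_t = \sum_k \zeta_{kF}^2 N^{\theta_k^{(F)}}\kappa_k\int_0^t\phi_k(Z^{N,\gamma_F})\,ds .
\]
Only $k=3$ and $k=5$ survive, since $\theta_3^{(F)} = \theta_5^{(F)} = 0$. The term $\kappa_3\int Z_I Z_F\,ds$ converges to $\kappa_3 z_I^\star\int \bar z_F\,ds$ by Theorem~\ref{prop:slow} (the averaging in its first display plus integration by parts), and $\kappa_5\int Z_F\,ds \to \kappa_5\int\bar z_F\,ds$. The jumps of $W_F^N$ are of size $N^{-\alpha_F/2}\to 0$. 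The martingale CLT then gives
\[
W_F^N \Rightarrow \int_0^\cdot \sqrt{(\kappa_3 z_I^\star+\kappa_5)\,\bar z_F}\;dB .
\]
Finally, $U^N$ solves a linear integral equation driven by $(U^N(0), W_F^N, o_p(1))$. The solution map is continuous by Gronwall, so the continuous-mapping theorem yields $U^N\Rightarrow U$, and uniqueness holds because the SDE is linear with a deterministic diffusion coefficient.
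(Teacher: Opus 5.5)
Your overall strategy matches the paper's. You linearize the fragment drift, kill the intact--fragment coupling by homogenizing the fast intact fluctuation through the identity $N^{\iota}\kappa_4\int(\cdot)\,ds = (\text{martingale}) - (\text{boundary}) + \ldots$, and let the intact martingale dominate at size $N^{-(\rho_1-\rho_3)/2}$ (the paper's $\operatorname{Var}\Xi^N \asymp N^{-(\rho_1-\rho_3)}$). You then finish with the martingale CLT, Gronwall and continuous mapping. Centering both coordinates at the finite-$N$ solution $\bar z^N$ is, if anything, more careful than the paper, which writes $\xi^N$ as a pure Ornstein--Uhlenbeck process and drops $R_N$. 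The deterministic part of $R_N$ shifts the intact mean by $O(N^{-(\rho_1-\rho_3)})$, which after the factor $N^{\alpha_F/2}$ need not be negligible, and your centering is what cancels it.

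The step that fails as written is your second exponent inequality. Since $\rho_3>\rho_2$, we have $\iota'=\theta_3^{(I)}$ and $\iota'-\iota=\rho_3-\rho_1$, so
\[
\tfrac{\alpha_F-\alpha_I}{2}+\iota'-\iota<0 \iff \alpha_F-\alpha_I<2(\rho_1-\rho_3),
\]
which nothing in (A1)--(A3) or the strict gap provides. For example, $\alpha_I=1$, $\alpha_F=4$, $\rho_1=\rho_4=0$, $\rho_3=\rho_5=-1$, $\rho_2=-2$ satisfy all hypotheses, and the exponent is $1/2$. The culprit is your Lipschitz bound: it estimates $N^{\theta_3^{(I)}}\kappa_3(Z_I-\bar z_I^N)Z_F$ pointwise by $N^{\iota'-\alpha_I/2}$, discarding exactly the averaging you exploit for the coupling term.

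The repair is an absorption. Write $Z_F=\bar z_F^N+N^{-\alpha_F/2}U^N$. After the prefactor $N^{\alpha_F/2-\iota}$, the dynamics~\eqref{eq:rxn3} part of $\int_0^t(R_N(Z)-R_N(\bar z^N))\,ds$ equals
\[
-N^{\iota'-\iota}\kappa_3\Big(J^N(t)+\int_0^t (Z_I-\bar z_I^N)\,U^N\,ds+\int_0^t \bar z_I^N\,U^N\,ds\Big),
\qquad J^N(t):=N^{\alpha_F/2}\int_0^t (Z_I-\bar z_I^N)\,\bar z_F^N\,ds .
\]
Set $A^N(t):=N^{\alpha_F/2}\int_0^t(Z_I-\bar z_I^N)\,ds$. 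Your integration by parts gives $\sup_{t\le T}|J^N|\le C\sup_{t\le T}|A^N|$. So the first term is $N^{\iota'-\iota}\to 0$ times the quantity being estimated, and for large $N$ it can be moved to the left-hand side. Equivalently, the dynamics~\eqref{eq:rxn3} collisions only raise the intact mean-reversion rate from $N^{\iota}\kappa_4$ to roughly $N^{\iota}\kappa_4+N^{\iota'}\kappa_3 Z_F$. The last two terms are $o_p(1)$ on the event $\{\sup_{t\le T}|U^N|\le K\}$. The part of the dynamics~\eqref{eq:rxn2} term that is linear in $Z_I-\bar z_I^N$ is absorbed the same way. After this, only your first and third exponents are needed, and both are correct.

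A smaller point: the boundary term is $O_p(N^{-\alpha_I/2})$ only pointwise. Uniformly on $[0,T]$ the fast fluctuation picks up a $\sqrt{\log N}$ factor, which is harmless because your first exponent is strictly negative.
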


\begin{proof}
Centering at $\bar z_F^N$ and multiplying by $N^{\alpha_F/2}$, only the fragment reactions $k = 3, 5$ survive (Theorem~\ref{prop:slow}), so
\[
U^N(t) = U^N(0) + \int_0^t N^{\alpha_F/2}\big[ \kappa_3 Z_I^{N,\gamma_F}Z_F^{N,\gamma_F} - \kappa_5 Z_F^{N,\gamma_F} - (\kappa_3 z_I^\star - \kappa_5)\bar z_F^N \big]\,ds + \mathcal W_F^N(t) + o_p(1),
\]
with $\mathcal W_F^N := N^{\alpha_F/2}\big(\zeta_{3F} M_{3F}^{N,\gamma_F} + \zeta_{5F} M_{5F}^{N,\gamma_F}\big)$. 
Using $\kappa_3 Z_I Z_F - \kappa_3 z_I^\star \bar z_F = \kappa_3 z_I^\star (Z_F - \bar z_F) + \kappa_3 (Z_I - z_I^\star) Z_F$ and $\bar z_F^N \to \bar z_F$,
\[
U^N(t) = U^N(0) + \big(\kappa_3 z_I^\star - \kappa_5\big)\!\int_0^t U^N(s)\,ds + \Xi^N(t) + \mathcal W_F^N(t) + o_p(1),\]
\[\Xi^N(t) := \kappa_3 N^{\alpha_F/2}\!\int_0^t (Z_I^{N,\gamma_F} - z_I^\star) Z_F^{N,\gamma_F}\,ds .
\]

As in Theorem~\ref{prop:clt-fast}, $\langle \mathcal W_F^N\rangle_t \probc \int_0^t (\kappa_3 z_I^\star + \kappa_5)\bar z_F\,ds$ (from $k = 3, 5$, $\zeta_{kF}^2 = 1$, $\phi_3 \to z_I^\star \bar z_F$, $\phi_5 \to \bar z_F$), with jumps of size $N^{-\alpha_F/2} \to 0$; hence $\mathcal W_F^N \Rightarrow \int_0^\cdot \sqrt{(\kappa_3 z_I^\star + \kappa_5)\bar z_F}\,dB$.

Let $\xi^N := N^{\alpha_I/2}(Z_I^{N,\gamma_F} - z_I^\star)$. By Step~1 of Theorem~\ref{prop:slow} it is a fast Ornstein--Uhlenbeck process,
\[
d\xi^N(t) = -N^{\iota}\kappa_4\, \xi^N(t)\,dt + dW_\xi^N(t), \qquad \tfrac{d}{dt}\langle W_\xi^N\rangle_t \asymp N^{\iota}\big(\kappa_1 + \kappa_4 z_I^\star\big) = 2\kappa_1 N^{\iota},
\]
with $\iota = \gamma_F - \gamma_I > 0$, whose stationary variance is $z_I^\star$, so $\|\xi^N\|_{\infty,[0,T]} = O_p(1)$. From $N^{\iota}\kappa_4\int_0^t \xi^N\,ds = W_\xi^N(t) - (\xi^N(t) - \xi^N(0))$ we obtain $\operatorname{Var}\big(\int_0^t \xi^N\,ds\big) \sim \tfrac{2\kappa_1}{\kappa_4^2} N^{-\iota} t$. Since $Z_F^{N,\gamma_F} = \bar z_F + O_p(N^{-\alpha_F/2})$ and, by integration by parts as in Theorem~\ref{prop:slow}, $\Xi^N = \kappa_3 N^{(\alpha_F - \alpha_I)/2}\int_0^t \xi^N \bar z_F\,ds + o_p(1)$,
\[
\operatorname{Var} \Xi^N(t) \asymp N^{\alpha_F - \alpha_I}\cdot N^{-\iota} = N^{-(\rho_1 - \rho_3)} \longrightarrow 0,
\qquad \iota = (\alpha_F - \alpha_I) + (\rho_1 - \rho_3).
\]
Hence $\Xi^N \probc 0$ uniformly on $[0, T]$. Consequently $U^N$ solves a linear equation driven by $U^N(0) \Rightarrow u_0$ and $\mathcal W_F^N$ with $o_p(1)$ perturbations, and Gronwall's inequality and the continuous mapping theorem give $U^N \Rightarrow U$. 
\end{proof}

\begin{remark}
    \label{rem:additive-GSDE-noise}
    Observe that the noise in Theorem~\ref{prop:clt-slow} is additive, and the solution of the SDE is $U(t) = e^{\Delta t} (u_0 + M(t))$, where $M$ is the Gaussian martingale $M(t) = \int_0^t e^{-\Delta s}\sqrt{\kappa_5(R_0 + 1) \bar z_F(s)} dB(s)$. This gives the formal approximation
    \[
        Z_F^{N,\gamma_F}(t) \approx e^{\Delta t} \left[ z_{F,0} + N^{-\alpha_F/2} \left(u_0 + M(t) \right)\right].
    \]
    For $R_0 > 1$, the fluctuations remain subordinate to the fluid term $e^{\Delta t} z_{F,0}$ by the fixed factor $N^{-\alpha_F/2}$, uniformly in $t$. The bracketed term is non-positive with probability $\exp(-\Theta(N^{\alpha_F}))$. Therefore, extinction is a large deviation event on this scale. On the other hand, at $R_0 =1$, we have $\text{Var}(M(t)) = 2 \kappa_5 z_{F,0} t$ and the stochastic fluctuation term reaches the size of the fluid term at time horizon of order $N^{\alpha_F}$. Thus, criticality is the only regime in which this horizon is a power of $N$. This is the near-critical family of (A2') in the next section on the time-scale $\gamma_F+\alpha_F$.
\end{remark}

\begin{remark}[The critical case $\rho_1 = \rho_3$]
\label{rem:critical-GSDE}
When the dominant intact reaction and the intact--fragment collision balance at the \emph{same} order, $\rho_1 = \rho_3$, one has $\iota = \alpha_F - \alpha_I$ and the fast-fluctuation term $\Xi^N$ no longer vanishes: it is $O_p(1)$. Homogenizing the fast Ornstein--Uhlenbeck noise through $N^{\iota}\kappa_4 \int_0^t \xi^N\,ds = W_\xi^N(t) - (\xi^N(t) - \xi^N(0))$ gives
\[
\Xi^N \;\Rightarrow\; \frac{\kappa_3}{\kappa_4}\int_0^t \bar z_F(s)\,dG(s), \qquad \langle G \rangle_t = 2\kappa_1\!\int_0^t \bar z_F(s)^2\,ds,
\]
where $G$ is a Brownian motion produced by the invariance principle for the fast process, asymptotically independent of the intrinsic fragment noise (dynamics $1, 4$ being independent of dynamics $3, 5$ up to leading order). 
The two Gaussian sources add, so $U^N \Rightarrow U$ now solves
\[
dU = \big(\kappa_3 z_I^\star - \kappa_5\big) U\,dt + \sqrt{\big(\kappa_3 z_I^\star + \kappa_5\big)\bar z_F(t) + \frac{2\kappa_1 \kappa_3^2}{\kappa_4^2}\,\bar z_F(t)^2}\;dB .
\]
The extra term $\tfrac{2\kappa_1 \kappa_3^2}{\kappa_4^2}\bar z_F^2$ is the effective diffusivity contributed by the averaged-out fast dynamics --- the diffusion-approximation phenomenon in~\cite{kang2014central}. 
\end{remark}


\section{Slow Time-Scale Limit at the Critical Kessler Threshold}
\label{sec:critical}

The ODE and SDE limits above were derived at a \emph{fixed} distance from the critical Kessler threshold, measured by the growth rate $\Delta := \kappa_3 z_I^\star - \kappa_5$ of Corollary~\ref{cor:kessler}, with the rate constants held fixed. We now consider instead a \emph{near-critical family}, indexed by the system size $N$: the rate constants form convergent sequences $\kappa_k^{(N)} \to \kappa_k \in (0, \infty)$ (see~(A2') below), tuned so that the intact steady state $z_I^{\star,N} := \kappa_1^{(N)}/\kappa_4^{(N)}$ approaches the critical intact abundance,
\[
z_I^{\star,N} \;=\; \frac{\kappa_1^{(N)}}{\kappa_4^{(N)}} \;\longrightarrow\; z_I^{\mathrm{crit}} := \frac{\kappa_5}{\kappa_3},
\qquad
\Delta_N \;:=\; \kappa_3^{(N)}\, z_I^{\star,N} - \kappa_5^{(N)} \;\longrightarrow\; 0 .
\]
Here and below, $\kappa_k$ denotes the limit $\lim_N \kappa_k^{(N)}$, so that $z_I^{\mathrm{crit}} = \kappa_5/\kappa_3$. Thus $\Delta_N$ is the $N$-dependent analogue of the fixed growth rate $\Delta$: it measures the (vanishing) distance of the $N$th system from criticality, and the sign of $\Delta$ plays the same role in determining the asymptotic (in time) behavior of the system as in Bradley and Wein~\cite{bradley2009space}. The near-criticality may be driven by tuning any of the four rate constants $\kappa_1^{(N)}, \kappa_3^{(N)}, \kappa_4^{(N)}, \kappa_5^{(N)}$, not only launch and de-orbit.
At exact criticality the averaged fragment equation $\dot{\bar z}_F = \Delta\, \bar z_F$ degenerates to $\dot{\bar z}_F = 0$: the deterministic fragment dynamics vanishes, and the fate of the debris population is decided entirely by fluctuations.

The structural reason is that, with the intacts averaged at $z_I^{\mathrm{crit}}$, each fragment is replicated through the collision~\eqref{eq:rxn3} at per-capita rate $\kappa_3 z_I^{\mathrm{crit}}$ and removed through~\eqref{eq:rxn5} at rate $\kappa_5$. 
The Kessler threshold $\kappa_3 z_I^{\mathrm{crit}} = \kappa_5$ is precisely the critical branching condition, at which the natural scaling limit is a \emph{Feller diffusion}, rather than a Gaussian process.



\smallskip
\noindent(A2') \emph{(Near-critical family.)} The rate constants form convergent sequences $\kappa_k^{(N)} \to \kappa_k \in (0, \infty)$ with $z_I^{\star,N} \to z_I^{\mathrm{crit}}$ and $\theta := \lim_{N} N^{\alpha_F}\Delta_N \in \bR$, and the exponents $\alpha, \rho$ and the balance (A3) hold as before. In addition, \emph{both} feedbacks of the subdominant collisions are negligible on the branching scale:
\[
\rho_1 - \rho_3 > \alpha_F \quad (\text{intact depletion by~\eqref{eq:rxn3}}), \qquad \rho_5 - \rho_2 > \alpha_F \quad (\text{fragment immigration by~\eqref{eq:rxn2}}).
\]
The first strengthens the gap $\rho_1 > \rho_3$ of (A3).

On the
intermediate time-scale $\gamma_F$, the drift displaces the fragment density by an $O(1)$
amount over a horizon $1/\Delta_N$, while the fluctuations of
Theorem~\ref{prop:clt-slow}, of size $N^{-\alpha_F/2}$, require a horizon
$N^{\alpha_F}$ to accumulate to the same order. Away from the threshold the
first horizon is far shorter and the dynamics are deterministic to
leading order, as Theorems~\ref{prop:clt-fast} and~\ref{prop:clt-slow}
show. The two coincide when $\Delta_N \asymp N^{-\alpha_F}$, and this fixes
the near-critical family (A2') and its parameter
$\theta = \lim_N N^{\alpha_F}\Delta_N$, and simultaneously identifies
$N^{\alpha_F}$ as the horizon on which the limit must be taken. That is, the time-scale $\gamma_F + \alpha_F$.

\begin{theorem}[Near-critical convergence to Feller diffusion]\label{thm:critical}
    Under (A1), (A2'), and (A3), define the branching-scale fragment process
    \[
    \mathcal Z_F^N(\tau) := Z_F^{N,\gamma_F}\big(N^{\alpha_F}\tau\big).
    \]
    Then, as $N \to \infty$, $\mathcal Z_F^N \Rightarrow z_F$, where $z_F$ is the unique strong solution of the Feller diffusion with drift
    \begin{align}\label{eq:feller}
       dz_F(\tau) = \theta\, z_F(\tau)\,d\tau + \sqrt{2\kappa_5\, z_F(\tau)}\;dB(\tau), \qquad z_F(0) = z_{F,0}, 
    \end{align}
    absorbed at $0$.
\end{theorem}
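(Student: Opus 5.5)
We write $\mathcal Z_F^N$ as a semimartingale on the slow clock $\gamma_F+\alpha_F$, show that its drift and quadratic variation converge to $\theta z$ and $2\kappa_5 z$, prove tightness, and identify the limit through the martingale problem for the Feller generator.

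\textbf{Semimartingale decomposition.} On the clock $\gamma_F+\alpha_F$ the fragment exponents are $\theta_k^{(F)}+\alpha_F = \rho_k-\rho_5+\alpha_F$. Hence reactions~\eqref{eq:rxn3} and~\eqref{eq:rxn5} carry a prefactor $N^{\alpha_F}$, and reaction~\eqref{eq:rxn2} carries $N^{\alpha_F-(\rho_5-\rho_2)}\to 0$ by (A2'). We therefore write
\[
\mathcal Z_F^N(\tau)=\mathcal Z_F^N(0)+N^{\alpha_F}\!\int_0^\tau \big(\kappa_3^{(N)}\mathcal Z_I^N-\kappa_5^{(N)}\big)\mathcal Z_F^N\,ds+\mathcal M^N(\tau)+o_p(1),
\]
where $\mathcal Z_I^N(\tau):=Z_I^{N,\gamma_F}(N^{\alpha_F}\tau)$. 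The martingale $\mathcal M^N$ has jumps of size $N^{-\alpha_F}$ and
\[
\langle\mathcal M^N\rangle_\tau=\int_0^\tau\big(\kappa_3^{(N)}\mathcal Z_I^N+\kappa_5^{(N)}\big)\mathcal Z_F^N\,ds .
\]
We split the drift as
\[
N^{\alpha_F}\Delta_N\int_0^\tau\mathcal Z_F^N\,ds+\kappa_3^{(N)}N^{\alpha_F}\int_0^\tau(\mathcal Z_I^N-z_I^{\star,N})\mathcal Z_F^N\,ds .
\]
The first piece converges to $\theta\int\mathcal Z_F^N$ by (A2').

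\textbf{Main obstacle: the cross term.} The hard step is to show that the second piece, $\Xi^N(\tau):=\kappa_3 N^{\alpha_F}\int_0^\tau(\mathcal Z_I^N-z_I^{\star,N})\mathcal Z_F^N\,ds$, is $o_p(1)$ uniformly on $[0,T]$. Here the $N^{\alpha_F}$ amplification is severe, and Theorem~\ref{prop:slow} only gives $o(1)$ for the unamplified integral. I would redo Step~1 of Theorem~\ref{prop:slow} on the longer clock, where the intact relaxation rate is $N^{\iota+\alpha_F}\kappa_4$. This gives
\[
\kappa_4\int_0^\tau(\mathcal Z_I^N-z_I^{\star,N})\,ds=N^{-(\iota+\alpha_F)}\big[-\Delta\mathcal Z_I^N+\textstyle\int R_N+\mathcal M_I^N\big].
\]
The three bracketed terms are respectively $O_p(1)$; $R_N$ (depletion by~\eqref{eq:rxn3}) of order $N^{\theta_3^{(I)}+\alpha_F}=N^{\iota+\alpha_F-(\rho_1-\rho_3)}$; and the intact martingale of order $N^{(\iota+\alpha_F-\alpha_I)/2}$.

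Because $\mathcal Z_F^N$ is not frozen, I would not multiply by it directly. Instead I would integrate by parts as in Theorem~\ref{prop:slow}, so that $\mathcal Z_F^N$ multiplies the antiderivative $A_N$; this leaves a term $\int A_N\,d\mathcal Z_F^N$ whose drift and martingale parts are controlled by the same bounds. After multiplying by $N^{\alpha_F}$, the three contributions are:
\begin{itemize}
\item the boundary term, of order $N^{-\iota}\to0$;
\item the depletion term, of order $N^{\alpha_F-(\rho_1-\rho_3)}\to 0$, which is exactly the first condition in (A2');
\item the noise term, whose variance is of order $N^{2\alpha_F}N^{-2(\iota+\alpha_F)}N^{\iota+\alpha_F-\alpha_I}=N^{\alpha_F-\alpha_I-\iota}=N^{-(\rho_1-\rho_3)}\to0$, computed as in the variance of $\Xi^N$ in Theorem~\ref{prop:clt-slow}.
\end{itemize}
Tightness of $\sup\mathcal Z_I^N$ on the long horizon needs a slight extension of Lemma~\ref{lem:apriori}. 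The integrating-factor bound there is uniform in the time horizon, so only a logarithmic-in-horizon maximal estimate for the Ornstein--Uhlenbeck noise is required. The same averaging shows $\langle\mathcal M^N\rangle_\tau-2\kappa_5\int_0^\tau\mathcal Z_F^N\,ds\to0$, using $\kappa_3 z_I^{\mathrm{crit}}=\kappa_5$.

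\textbf{Tightness and identification.} Localize at $\sigma_L=\inf\{\tau:\mathcal Z_F^N>L\}$. Taking expectations and applying Gronwall with the drift $|\theta|+o(1)$ gives $\E\,\mathcal Z_F^N(\tau\wedge\sigma_L)\le Ce^{C\tau}$. Doob's inequality then controls $\sup\mathcal Z_F^N$, so $\pr(\sigma_L\le T)\to0$ uniformly in $N$ as $L\to\infty$. The Aldous criterion, applied to the bounded-variation part and to $\langle\mathcal M^N\rangle$, together with jump sizes $N^{-\alpha_F}\to0$, yields C-tightness. For any subsequential limit $z$, the process $f(z(\tau))-\int_0^\tau(\theta z f'+\kappa_5 z f'')\,ds$ is a martingale for all $f\in C_c^2$, by passing to the limit in It\^o's formula for $f(\mathcal Z_F^N)$; the jump remainders are $O(N^{-\alpha_F})$. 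Because the coefficients $\theta z$ and $\sqrt{2\kappa_5 z}$ satisfy the Yamada--Watanabe conditions, pathwise uniqueness holds for~\eqref{eq:feller}. Hence the martingale problem is well-posed and the whole sequence converges.

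Absorption at $0$ is automatic. The limit stays nonnegative, and once it reaches $0$ both coefficients vanish, so it remains there. On the prelimit side, when $\mathcal Z_F^N=0$ only reaction~\eqref{eq:rxn2} can restart the fragments, and its total contribution is $o_p(1)$ by the second condition in (A2').
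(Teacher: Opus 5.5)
Your architecture matches the paper's: split off $N^{\alpha_F}\Delta_N\mathcal Z_F^N$, remove the amplified intact cross term by solving the fast intact equation, localize, then Gronwall, Aldous and Yamada--Watanabe. Your exponent bookkeeping is also right. With $B_N(\tau) := N^{\alpha_F}\int_0^\tau(\mathcal Z_I^N - z_I^{\star,N})\,d\sigma$, the boundary, depletion and noise pieces give $\sup_{\tau\le T}|B_N|\probc 0$ under (A2').

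\emph{The gap is the next step:} the claim that $\int B_N\,d\mathcal Z_F^N$ is ``controlled by the same bounds'' as in Theorem~\ref{prop:slow}. That argument relied on the drift of $Z_F^{N,\gamma_F}$ having $O_p(1)$ total variation on $[0,T]$. On the branching clock this fails. The drift of $\mathcal Z_F^N$ is $(N^{\alpha_F}\Delta_N + \kappa_3^{(N)}B_N')\,\mathcal Z_F^N$, so it contains exactly the amplified cross term you are trying to remove. Since the fast intact fluctuations have size $N^{-\alpha_I/2}$, its total variation on $[0,T]$ is of order $N^{\alpha_F-\alpha_I/2}\to\infty$. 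The drift part of $\int B_N\,d\mathcal Z_F^N$ therefore contains $\kappa_3^{(N)}\int_0^\tau B_N B_N'\,\mathcal Z_F^N\,d\sigma$. The only bound you have for it is $\sup|B_N|\cdot N^{\alpha_F-\alpha_I/2}$, and this does not vanish under (A2') alone. For example, take $\rho_1-\rho_3=\alpha_F+\epsilon$ with small $\epsilon>0$. The noise part of $B_N$ alone is of size $N^{-(\alpha_F+\epsilon)/2}$, so the bound is of order $N^{(\alpha_F-\alpha_I-\epsilon)/2}$, which diverges because $\alpha_F>\alpha_I$.

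\emph{The step can be repaired, but it needs an idea you have not supplied.} One option: $B_N$ is absolutely continuous with $B_NB_N'=\tfrac12(B_N^2)'$, so you can integrate by parts again, gaining a factor $\sup|B_N|$ per round. A cleaner option is to set $Y^N := e^{-\kappa_3^{(N)}B_N}\mathcal Z_F^N$. Then
\[
Y^N(\tau) = Y^N(0) + \int_0^\tau N^{\alpha_F}\Delta_N Y^N\,d\sigma + \int_0^\tau e^{-\kappa_3^{(N)}B_N}\,d\mathcal M^N + o_p(1),
\]
which has no singular drift, and after localization $\sup_{\tau\le T}|\mathcal Z_F^N - Y^N|\probc 0$. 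Running your Gronwall, Aldous and It\^o-formula steps on $Y^N$ also removes a second occurrence of the problem. If you apply It\^o's formula to $f(\mathcal Z_F^N)$ in the prelimit, the singular term $\int f'(\mathcal Z_F^N)\,\kappa_3^{(N)}B_N'\,\mathcal Z_F^N\,d\sigma$ reappears.

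The paper avoids the issue differently. With $\eta^N := Z_I^{N,\gamma_F}-z_I^{\star,N}$, it substitutes $\eta^N\,ds = (N^{\iota}\kappa_4^{(N)})^{-1}(-d\eta^N + R_N\,ds + dM_I^N)$ and integrates by parts in $\int Z_F\,d\eta^N$. The singular part of the drift of $Z_F$ then enters only through $\kappa_3\int(\eta^N)^2 Z_F\,ds$. Its expectation is controlled by the fast Ornstein--Uhlenbeck second moment $O(N^{-\alpha_I})$, not by a supremum times a total variation. This gives $O_p(N^{-(\rho_1-\rho_3)})$ for that term.
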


\begin{proof}
Throughout, the rate constants are the $N$-dependent $\kappa_k^{(N)}$ of the near-critical family (A2'), with limits $\kappa_k^{(N)} \to \kappa_k$, and the intact steady state is $z_I^{\star,N} := \kappa_1^{(N)}/\kappa_4^{(N)} \to z_I^{\mathrm{crit}}$; we write the finite-$N$ quantities with the superscript $(N)$ and their limits without, and all limits are as $N \to \infty$.

On the $\gamma_F$ time-scale, the fragment dynamics~\eqref{eq:rxn3},~\eqref{eq:rxn5} fire at rate $N^{\gamma_F + \rho_3} = N^{\alpha_F}$; we pass to branching time $t = N^{\alpha_F}\tau$. Splitting the birth--death drift as $\kappa_3^{(N)} Z_I Z_F - \kappa_5^{(N)} Z_F = \Delta_N Z_F + \kappa_3^{(N)}(Z_I - z_I^{\star,N}) Z_F$ and using $N^{\alpha_F}\!\int_0^\tau f(N^{\alpha_F}\sigma)\,d\sigma = \int_0^{N^{\alpha_F}\tau} f(s)\,ds$, the fragment coordinate of the basic representation becomes
\begin{equation}\label{eq:crit-decomp}
\mathcal Z_F^N(\tau) = z_{F,0} + \int_0^\tau N^{\alpha_F}\Delta_N\, \mathcal Z_F^N\,d\sigma + \mathcal E_1^N(\tau) + \mathcal E_2^N(\tau) + \mathcal M^N(\tau),
\end{equation}
where, writing $Z_I = Z_I^{N,\gamma_F}$, $Z_F = Z_F^{N,\gamma_F}$, and the fragment martingale $M_F^N := \sum_{k \in \{2,3,5\}}\zeta_{kF} M_{kF}^{N,\gamma_F}$,
\[
\mathcal E_1^N(\tau) := \kappa_3^{(N)}\!\int_0^{N^{\alpha_F}\tau} (Z_I - z_I^{\star,N}) Z_F\,ds, \qquad
\mathcal E_2^N(\tau) := 2\kappa_2^{(N)} N^{\alpha_F + \theta_2^{(F)}}\!\int_0^\tau (\mathcal Z_I^N)^2\,d\sigma, \qquad
\mathcal M^N(\tau) := M_F^N(N^{\alpha_F}\tau) .
\]

\emph{Step 1: main drift and recovery of the noise.} By assumption, we have $N^{\alpha_F}\Delta_N \to \theta$. 
The martingale is where the abundance suppression $N^{-\alpha_F}$ is canceled by the length $N^{\alpha_F}$ of the branching horizon.
Since $M_F^N$ has quadratic-variation rate $N^{-\alpha_F}\big[(\kappa_3^{(N)} z_I^{\star,N} + \kappa_5^{(N)}) Z_F + O(N^{\theta_2^{(F)}})\big]$ per unit $\gamma_F$-time (dynamics~\eqref{eq:rxn3},~\eqref{eq:rxn5}, $\zeta_{kF}^2 = 1$, $\theta_k^{(F)} = 0$), the substitution $s = N^{\alpha_F}\sigma$ gives
\begin{equation}\label{eq:crit-qv}
\langle \mathcal M^N\rangle_\tau
= N^{-\alpha_F}\cdot N^{\alpha_F}\!\int_0^\tau \big(\kappa_3^{(N)} z_I^{\star,N} + \kappa_5^{(N)}\big)\, \mathcal Z_F^N\,d\sigma + o_p(1)
\;\longrightarrow\; \int_0^\tau 2\kappa_5\, z_F\,d\sigma,
\end{equation}
where we also use $\kappa_3^{(N)} z_I^{\star,N} \to \kappa_5$ and $\kappa_5^{(N)} \to \kappa_5$. On the fixed horizon of Theorem~\ref{prop:slow} the same quantity is $O(N^{-\alpha_F}) \to 0$ --- which is exactly why that limit is deterministic.
The branching horizon is the scale, $N^{\alpha_F}$ times longer, on which the accumulated quadratic variation is restored to $O(1)$. 

\emph{Step 2: localization and intact estimates.} Fix $L > z_{F,0}$ and set $\tau_L := \inf\{\tau : \mathcal Z_F^N(\tau) > L\}$. All estimates below hold on $[0, \tau_L]$, where $\mathcal Z_F^N \leq L$, with constants that may depend on $L$; we take $N \to \infty$ first and then $L \to \infty$.
On the $\gamma_F$ time-scale, the intact fluctuation $\eta^N := Z_I - z_I^{\star,N}$ obeys
\[
d\eta^N = -N^{\iota}\kappa_4^{(N)}\,\eta^N\,ds + R_N\,ds + dM_I^N, \qquad R_N = -N^{\theta_3^{(I)}}\kappa_3^{(N)} Z_I Z_F - 2N^{\theta_2^{(I)}}\kappa_2^{(N)} Z_I^2 \leq 0,
\]
with intact martingale $M_I^N$ of rate $d\langle M_I^N\rangle \asymp N^{\iota - \alpha_I}(\kappa_1^{(N)} + \kappa_4^{(N)} Z_I)\,ds$. The integrating factor $e^{-N^{\iota}\kappa_4^{(N)} s}$ gives $\sup_s \E[Z_I(s)^2 \ind_{\{s \leq \tau_L\}}] \leq C$ and the fast Ornstein--Uhlenbeck (stationary-variance) bound
\begin{equation}\label{eq:crit-eta}
\sup_{0 \leq s \leq N^{\alpha_F}T} \E\big[(\eta^N(s))^2\, \ind_{\{s \leq \tau_L\}}\big] = O(N^{-\alpha_I}),
\end{equation}
with the driving noise contributing $N^{\iota - \alpha_I}/(2N^{\iota}\kappa_4^{(N)}) = O(N^{-\alpha_I})$ and the subdominant drift contributing $\|R_N/(N^{\iota}\kappa_4^{(N)})\|^2 = O_p(N^{2(\theta_3^{(I)} - \iota)}) = O_p(N^{-2(\rho_1 - \rho_3)})$.

\emph{Step 3: the intact feedback vanishes.} Solving the intact equation for \[\eta^N\,ds = (N^{\iota}\kappa_4^{(N)})^{-1}\big(-d\eta^N + R_N\,ds + dM_I^N\big)\] and integrating by parts in the first term,
\[
\mathcal E_1^N(\tau)
= \frac{\kappa_3^{(N)}}{N^{\iota}\kappa_4^{(N)}}\bigg[\Big(\!-Z_F\eta^N\big|_0^{N^{\alpha_F}\tau} + \!\int_0^{N^{\alpha_F}\tau}\!\eta^N\,dZ_F\Big) + \int_0^{N^{\alpha_F}\tau}\! Z_F R_N\,ds + \int_0^{N^{\alpha_F}\tau}\! Z_F\,dM_I^N\bigg].
\]
Using $Z_F \leq L$, the second-moment bound~\eqref{eq:crit-eta}, and $\theta_3^{(I)} - \iota = \rho_3 - \rho_1$, we bound the three groups (in $L^2$, after the prefactor $N^{-\iota}$):
\begin{itemize}
    \item[\rm(i)] the boundary term is $O_p(N^{-\iota - \alpha_I/2})$ by Cauchy--Schwarz; $\int \eta^N\,dZ_F$ has drift part $\asymp \kappa_3^{(N)}\!\int(\eta^N)^2 Z_F\,ds$, with $\E \leq \kappa_3^{(N)} L\!\int \E[(\eta^N)^2]\,ds = O(L\, N^{\alpha_F - \alpha_I})$, and martingale part $\int \eta^N\,dM_F^N$ of quadratic variation $\int (\eta^N)^2\,d\langle M_F^N\rangle$, $\E \asymp N^{-\alpha_I}$; after $N^{-\iota}$, group~(i) is $O_p(N^{\alpha_F - \alpha_I - \iota}) = O_p(N^{-(\rho_1 - \rho_3)})$;
    \item[\rm(ii)] the dynamics \eqref{eq:rxn3} part of $\int Z_F R_N\,ds$ equals $-\,N^{\alpha_F - (\rho_1 - \rho_3)}\tfrac{\kappa_3^2 z_I^{\mathrm{crit}}}{\kappa_4}\!\int_0^\tau(\mathcal Z_F^N)^2\,d\sigma + o_p(1)$ (the \emph{logistic} term), and the dynamics \eqref{eq:rxn2} part is $O_p(N^{\alpha_F - (\rho_1 - \rho_2)})$; both vanish under (A2'), since $\rho_1 - \rho_2 > \rho_1 - \rho_3 > \alpha_F$;
    \item[\rm(iii)] $\int Z_F\,dM_I^N$ is a martingale of quadratic variation $\int Z_F^2\,d\langle M_I^N\rangle \leq L^2\!\int d\langle M_I^N\rangle \asymp L^2 N^{\iota - \alpha_I + \alpha_F}$, hence of size $O_p(L\, N^{(\iota - \alpha_I + \alpha_F)/2})$; and after multiplying by $N^{-\iota}$, it is $O_p(L\, N^{-(\rho_1 - \rho_3)/2})$.
\end{itemize}
The slowest term is~(iii), so $\sup_{\tau \leq T \wedge \tau_L}|\mathcal E_1^N| = O_p(N^{-(\rho_1 - \rho_3)/2}) \probc 0$.
Likewise, by~\eqref{eq:crit-eta} and $Z_I \leq z_I^{\star,N} + |\eta^N|$, we have \[\E|\mathcal E_2^N| \leq 2\kappa_2^{(N)} N^{\alpha_F + \theta_2^{(F)}}\!\int_0^\tau \E[(\mathcal Z_I^N)^2]\,d\sigma = O(N^{\alpha_F - (\rho_5 - \rho_2)}) \to 0,\] so $\mathcal E_2^N \probc 0$.

\emph{Step 4: moment bound, non-explosion, and tightness.} Insert Steps 1 and 3 into~\eqref{eq:crit-decomp}: on $[0, \tau_L \wedge T]$,
\[
\mathcal Z_F^N(\tau) = z_{F,0} + \int_0^\tau (\theta + o(1))\, \mathcal Z_F^N\,d\sigma + r_N(\tau) + \mathcal M^N(\tau), \qquad \E\sup_{\tau \leq T \wedge \tau_L}|r_N| \to 0 .
\]
Set $m_N(\tau) := \E\sup_{\tau' \leq \tau \wedge \tau_L}\mathcal Z_F^N$. By~\eqref{eq:crit-qv} and Doob's inequality, $\E\sup_{\tau' \leq \tau \wedge \tau_L}|\mathcal M^N| \leq 2\big(\E\langle \mathcal M^N\rangle_{\tau \wedge \tau_L}\big)^{1/2} \leq C\big(\int_0^\tau m_N\big)^{1/2} \leq \tfrac{C^2}{2} + \tfrac12\!\int_0^\tau m_N$, so
\[
m_N(\tau) \leq z_{F,0} + o(1) + \tfrac{C^2}{2} + \big(|\theta| + \tfrac12\big)\!\int_0^\tau m_N\,d\sigma ,
\]
and Gronwall's inequality gives $m_N(\tau) \leq C_T$ on $[0, T]$, uniformly in large $N$ and in $L$. Hence $\pr(\tau_L \leq T) \leq \pr\big(\sup_{\tau \leq T \wedge \tau_L}\mathcal Z_F^N \geq L\big) \leq m_N(T)/L \leq C_T/L$, so $\lim_{L \to \infty}\limsup_N \pr(\tau_L \leq T) = 0$: the localization is asymptotically negligible, $\{\sup_{\tau \leq T}\mathcal Z_F^N\}_N$ is tight, and $\mathcal Z_F^N$ does not explode. With the quadratic-variation control~\eqref{eq:crit-qv}, the family $\{\mathcal Z_F^N\}$ is tight in $D_{\bR}[0,\infty)$ by the Aldous--Rebolledo criterion~\cite{ethierkurtz1986}.

\emph{Step 5: identification.} By Steps 1--3, every weak limit point $z_F$ of $\{\mathcal Z_F^N\}$ solves the martingale problem for the generator $\mathcal A f(z) = \theta z f'(z) + \kappa_5 z f''(z)$: the drift is $\theta z_F$ (as $N^{\alpha_F}\Delta_N \to \theta$ and $\mathcal E_1^N, \mathcal E_2^N \probc 0$) and the quadratic variation is $\int_0^\cdot 2\kappa_5 z_F\,d\sigma$ by~\eqref{eq:crit-qv}. Since the diffusion coefficient $\sqrt{2\kappa_5 z}$ is H\"older-$\tfrac12$ and degenerates at $0$, pathwise uniqueness holds by the Yamada--Watanabe criterion~\cite{yamada1971}, so this martingale problem is well posed. Therefore the whole sequence converges, $\mathcal Z_F^N \Rightarrow z_F$, the unique Feller diffusion $dz_F = \theta z_F\,d\tau + \sqrt{2\kappa_5 z_F}\,dB$ absorbed at $0$. 
\end{proof}

The parameter $\theta = \lim_N N^{\alpha_F}\Delta_N$ is the rescaled distance to the Kessler threshold, and it replaces the deterministic sign test of Corollary~\ref{cor:kessler} by a probabilistic one. 

\begin{corollary}[Excursion probabilities]\label{cor:exit}
Let $T_x := \inf\{\tau : z_F(\tau) = x\}$. For $0 < z_{F,0} < L$,
\[
  \pr_{z_{F,0}}\bigl(T_L < T_0\bigr)
  \;=\; \frac{1 - e^{-\theta z_{F,0}/\kappa_5}}{1 - e^{-\theta L/\kappa_5}},
\]
which at exact criticality $(\theta = 0)$ reduces to $z_{F,0}/L$.
\end{corollary}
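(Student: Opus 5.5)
The plan is to use the scale function of the one-dimensional diffusion~\eqref{eq:feller}. The generator is $\mathcal A f(z) = \theta z f'(z) + \kappa_5 z f''(z)$. On $(0,\infty)$ we solve $\mathcal A s = 0$. Dividing by $z>0$ reduces this to $\kappa_5 s'' + \theta s' = 0$, so $s'(z) = e^{-\theta z/\kappa_5}$. We choose the normalization $s(z) = (1 - e^{-\theta z/\kappa_5})\kappa_5/\theta$ for $\theta \neq 0$, and $s(z) = z$ for $\theta = 0$; this is the continuous limit as $\theta \to 0$. In every case $s$ is smooth and increasing on $[0,\infty)$ with $s(0)=0$. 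The factor $z$ that appears in both coefficients cancels, so the scale function is explicit even though the diffusion degenerates at $0$.

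Next, set $T := T_0 \wedge T_L$. By It\^o's formula applied to the strong solution of Theorem~\ref{thm:critical}, $s(z_F(\tau \wedge T))$ is a local martingale with zero drift. Its martingale part is $\int s'(z_F)\sqrt{2\kappa_5 z_F}\,dB$. Up to time $T$ the integrand is bounded, because $z_F \in [0,L]$, so the stopped process is a bounded true martingale. Optional stopping then gives $s(z_{F,0}) = \E[s(z_F(\tau\wedge T))]$ for every $\tau$.

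The main step is to show $T < \infty$ almost surely, so that we can let $\tau \to \infty$ by bounded convergence. I would argue from the quadratic variation. On $\{T = \infty\}$ the path stays in $(0,L)$ forever, and $s(z_F(\tau\wedge T))$ is a bounded martingale, hence converges almost surely. Its quadratic variation $\int_0^{T} s'(z_F)^2\, 2\kappa_5 z_F\,d\sigma$ must therefore be finite. Convergence of $s(z_F)$ forces $z_F$ itself to converge to some limit in $[0,L]$. If the limit lies in $(0,L]$, the integrand is bounded below near it, so the quadratic variation diverges, which is a contradiction. If the limit is $0$, then $z_F$ hits $0$ in finite time by the standard Feller boundary classification: $0$ is an exit boundary, attained in finite time, because the speed-measure integral near $0$ behaves like $\int_0 dz/z \cdot z$, which is finite. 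Alternatively, for the interior-limit case, an Ornstein-type lower bound on the diffusion coefficient on compacts of $(0,L]$ suffices. The limit-$0$ case is the delicate point. An alternative I would try first is a Lyapunov function $g$ with $\mathcal A g \le -1$ on $(0,L)$, for example $g(z) = C - z^2$ adjusted, giving $\E T < \infty$ via Dynkin's formula. At $z$ near $0$ this fails, however, since $\mathcal A g = O(z)$. So I would fall back on the boundary classification, or simply note that $0$ is absorbing in the definition of~\eqref{eq:feller}, so that convergence to $0$ counts as $T_0 \le \infty$ in the limit sense.

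Given $T<\infty$ almost surely, we obtain $s(z_{F,0}) = s(L)\,\pr(T_L < T_0) + s(0)\,\pr(T_0 < T_L)$. Since $s(0)=0$, this gives $\pr(T_L<T_0) = s(z_{F,0})/s(L)$. Substituting the explicit $s$ yields the stated ratio, and the case $\theta=0$ yields $z_{F,0}/L$, consistent with taking the limit $\theta\to0$.
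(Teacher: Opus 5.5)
Your proposal is correct and follows the same scale-function route as the paper; the paper simply cites Ethier--Kurtz for the exit formula $\bigl(s(z_{F,0})-s(0)\bigr)/\bigl(s(L)-s(0)\bigr)$, which you derive by optional stopping, and you treat $\theta=0$ directly with $s(z)=z$ rather than as a limit. The limit-$0$ case you flag as delicate needs no extra argument, because on $\{T=\infty\}$ your quadratic-variation argument already forces $s(z_F)\to s(0)=0$, so that event contributes nothing to $s(z_{F,0}) = s(L)\,\pr(T_L<T_0)$ whether or not $0$ is reached in finite time (your exit-boundary claim is also correct).
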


\begin{proof}
The scale density of \eqref{eq:feller} is $s'(x) = e^{-\theta x/\kappa_5}$,
and the exit probability is $\bigl(s(z_{F,0}) - s(0)\bigr) / \bigl(s(L) -
s(0)\bigr)$; see \cite[Ch.~15]{ethierkurtz1986}. The critical case follows by
letting $\theta \to 0$.
\end{proof}

Corollary~\ref{cor:exit} is the precise sense in which fluctuations alone
drive the fragment population to arbitrary levels. At exact criticality the
drift vanishes identically, yet the population reaches any prescribed level
$L$ with probability $z_{F,0}/L$ before clearing. Thus, the excursion is produced
by the collision noise and by nothing else. Below the threshold, where the
deterministic model predicts monotone decay, the population still grows by a
factor of order $\kappa_5 / (|\theta| z_{F,0})$ with probability bounded away
from zero, an excursion scale that diverges as the threshold is approached
from below. These excursions are transient (the limit is absorbed at $0$
almost surely for $\theta \le 0$) but their magnitude is not predicted by
the ODE approximation at any distance from criticality that is $O(N^{-\alpha_F})$.
Letting $L \to \infty$ recovers the extinction
probability, to which we now turn in Corollary~\ref{cor:stoch-kessler}.

\begin{corollary}[Stochastic Kessler Syndrome]
    The Feller diffusion has the explicit extinction probability
\[
\pr\big(z_F(\tau) = 0\big) = \exp\!\left( -\frac{\theta\, z_{F,0}}{\kappa_5\,\big(1 - e^{-\theta \tau}\big)} \right),
\]
which at exact criticality ($\theta = 0$) reduces to $\exp\!\big(-z_{F,0}/(\kappa_5 \tau)\big)$. Hence:
\begin{itemize}
    \item \emph{Exactly critical ($\theta = 0$):} the debris population goes extinct almost surely, but the time to extinction is heavy-tailed, $\pr(z_F(\tau) > 0) \sim z_{F,0}/(\kappa_5 \tau)$, and excursions scale like $\sqrt{\tau}$.
    \item \emph{Subcritical ($\theta < 0$):} extinction almost surely, at an exponential rate.
    \item \emph{Supercritical ($\theta > 0$):} survival with positive probability $1 - e^{-\theta z_{F,0}/\kappa_5}$, on which event $\E[z_F(\tau)] = z_{F,0}e^{\theta \tau}$ grows exponentially --- a stochastic Kessler runaway.
\end{itemize}
\label{cor:stoch-kessler}
\end{corollary}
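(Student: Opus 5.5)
We will obtain the extinction probability from the Laplace functional of the Feller diffusion~\eqref{eq:feller}, which is a continuous-state branching process with branching mechanism $\psi(\lambda) = -\theta\lambda + \kappa_5\lambda^2$. The plan is to show that $\E_{z_{F,0}}\bigl[e^{-\lambda z_F(\tau)}\bigr] = e^{-z_{F,0}\, u(\tau,\lambda)}$, where $u$ solves the Riccati equation
\[
\partial_\tau u = \theta u - \kappa_5 u^2, \qquad u(0,\lambda) = \lambda .
\]
To verify this, we apply It\^o's formula to $\tau' \mapsto \exp\bigl(-z_F(\tau')\,u(\tau - \tau', \lambda)\bigr)$ on $[0,\tau]$. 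The generator $\mathcal A f(z) = \theta z f'(z) + \kappa_5 z f''(z)$ from Step~5 of Theorem~\ref{thm:critical}, acting on $e^{-zu}$, gives $z e^{-zu}\bigl(-\theta u + \kappa_5 u^2\bigr)$. This exactly cancels the time derivative, so the process is a bounded martingale and taking expectations yields the formula.

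We then solve the Riccati equation explicitly. Substituting $w = 1/u$ linearizes it to $\dot w = -\theta w + \kappa_5$, whose solution is
\[
u(\tau,\lambda) = \frac{\theta \lambda e^{\theta\tau}}{\theta + \kappa_5\lambda\,(e^{\theta\tau}-1)},
\]
with the $\theta = 0$ case given by $u = \lambda/(1 + \kappa_5\lambda\tau)$. Since $0$ is absorbing, $\pr(z_F(\tau)=0) = \lim_{\lambda\to\infty}\E e^{-\lambda z_F(\tau)}$ by monotone convergence. Letting $\lambda \to \infty$ gives $u(\tau,\infty) = \theta e^{\theta\tau}/\bigl(\kappa_5(e^{\theta\tau}-1)\bigr) = \theta/\bigl(\kappa_5(1 - e^{-\theta\tau})\bigr)$, which yields the displayed formula, and $1/(\kappa_5\tau)$ at $\theta = 0$. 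For $\theta < 0$ the expression $\theta/(1-e^{-\theta\tau})$ is still positive, so no case split is needed.

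The three bullets then follow by letting $\tau \to \infty$. At $\theta = 0$, $\pr(z_F(\tau) > 0) = 1 - e^{-z_{F,0}/(\kappa_5\tau)} \sim z_{F,0}/(\kappa_5\tau)$, which gives almost sure extinction with a heavy tail. The $\sqrt{\tau}$ excursion scale comes from $\operatorname{Var} z_F(\tau) = 2\kappa_5 z_{F,0}\tau$, obtained by differentiating the Laplace transform twice at $\lambda = 0$. For $\theta < 0$, $u(\tau,\infty) \sim (|\theta|/\kappa_5) e^{-|\theta|\tau}$, so $\pr(z_F(\tau)>0) \sim z_{F,0}|\theta| e^{\theta\tau}/\kappa_5$, which is exponential decay. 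For $\theta > 0$, $u(\tau,\infty) \to \theta/\kappa_5$, so the survival probability tends to $1 - e^{-\theta z_{F,0}/\kappa_5}$; this is consistent with Corollary~\ref{cor:exit} as $L\to\infty$.

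The growth statement is the main point needing care. Taking expectations in the SDE gives $\E z_F(\tau) = z_{F,0}e^{\theta\tau}$, since the stochastic integral is a true martingale by the moment bound of Step~4. Because $z_F = 0$ off the survival event, $\E[z_F(\tau)\mid \text{survival}] = z_{F,0}e^{\theta\tau}/\pr(\text{survival})$, which also grows exponentially. This is the sense in which we read ``on which event $\E[z_F(\tau)] = z_{F,0}e^{\theta\tau}$ grows exponentially'': the unconditional mean grows as $z_{F,0}e^{\theta\tau}$ and all of this mass sits on the survival event, so the conditional mean grows at the same exponential rate. If a pathwise statement is wanted, we would add that $e^{-\theta\tau}z_F(\tau)$ is a nonnegative martingale that converges almost surely to a limit which is positive on survival. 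This last claim is the only nontrivial step and would require an $L^2$-boundedness check, using the variance computed from the Laplace transform.
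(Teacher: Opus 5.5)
Your proof is correct. It also supplies something the paper does not: the paper gives no proof of Corollary~\ref{cor:stoch-kessler}. It quotes the finite-$\tau$ extinction formula as a known fact about the Feller diffusion. It supports only the ultimate survival probability $1-e^{-\theta z_{F,0}/\kappa_5}$, in two ways: by letting $L\to\infty$ in the scale-function exit probability of Corollary~\ref{cor:exit}, and by a heuristic count of independent lineages in the discrete chain, $\bigl(1-(1-R_0^{-1})\bigr)^{X_F(0)}\approx e^{-(R_0-1)X_F(0)}$.

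Your Laplace-functional/Riccati computation is the standard rigorous route for continuous-state branching processes. It gives the full law of the extinction time, and hence the $1/\tau$ tail at $\theta=0$ and the exponential decay for $\theta<0$. A scale-function argument is time-free and cannot see either of these. Your route also yields all moments for free. The paper's lineage picture, in exchange, explains the exponent $\theta z_{F,0}/\kappa_5$ as an expected number of surviving lineages, which your computation does not make visible.

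A few details should be tightened.
\begin{itemize}
\item The identity $\E[z_F(\tau)\mid\text{survival}]=z_{F,0}e^{\theta\tau}/\pr(\text{survival})$ is exact only for survival up to time $\tau$. For ultimate survival (probability $1-e^{-qz_{F,0}}$, with $q:=\theta/\kappa_5$), paths that die out later can still be positive at time $\tau$. The fix is one line. By the Markov property, $\E[z_F(\tau);\text{extinction}]=\E[z_F(\tau)e^{-qz_F(\tau)}]\le 1/(eq)$. In fact it equals $z_{F,0}e^{-\theta\tau}e^{-qz_{F,0}}$, since $u(\tau,q)=q$ and $\partial_\lambda u(\tau,q)=e^{-\theta\tau}$. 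So the conditional mean is still $z_{F,0}e^{\theta\tau}/(1-e^{-qz_{F,0}})$ up to an $O(e^{-\theta\tau})$ correction.
\item The moment bound of Step~4 concerns the prelimit $\mathcal Z_F^N$, not $z_F$. It is simpler to read off $\E z_F(\tau)=z_{F,0}\,\partial_\lambda u(\tau,0)=z_{F,0}e^{\theta\tau}$ directly from your Laplace transform, using $(1-e^{-\lambda z})/\lambda\uparrow z$ as $\lambda\downarrow0$.
\item For the optional pathwise statement, $L^2$-boundedness of $W_\tau:=e^{-\theta\tau}z_F(\tau)$ gives $\E W_\infty=z_{F,0}$ but not positivity on survival. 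Instead, note that $W_\tau$ is a nonnegative martingale and so converges almost surely. Your explicit $u$ then gives $\E e^{-\lambda W_\infty}=\lim_\tau\exp\bigl(-z_{F,0}u(\tau,\lambda e^{-\theta\tau})\bigr)=\exp\bigl(-z_{F,0}\theta\lambda/(\theta+\kappa_5\lambda)\bigr)$. Hence $\pr(W_\infty=0)=e^{-qz_{F,0}}$, which equals the extinction probability. Since extinction forces $W_\infty=0$, the two events coincide almost surely.
\item Your reading of the $\sqrt{\tau}$ claim as the unconditional standard deviation is the one that holds. On $\{z_F(\tau)>0\}$ the mean is $z_{F,0}/\pr(z_F(\tau)>0)\sim\kappa_5\tau$, so surviving paths are of order $\tau$, not $\sqrt{\tau}$.
\end{itemize}
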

At the threshold, then, whether the debris field runs away is genuinely random, governed by the continuous-state branching process rather than by the ODE drift. 
Indeed, the exponent in the probability has a direct interpretation in terms of the reproduction number $R_0 = \frac{\kappa_3 z_I^\star}{\kappa_5}$. In the near-critical family, a single fragment founds a `lineage' or branching of fragments that survive indefinitely with probability $1-R_0^{-1} \sim R_0 - 1$ for $R_0 \approx 1$. Therefore, the $X_F(0) = N^{\alpha_F} z_{F,0}$ initial fragments produce
\[
    \frac{\theta z_{F,0}}{\kappa_5} = (R_0 - 1) X_F(0)
\]
surviving lineages in expectation. As lineages are independent in a branching process, the number of survivors is a binomial random variable $\text{Bin}(X_F(0), R_0^{-1}(R_0-1))$. Extinction is the event that all $X_F(0)$ fail to produce a surviving lineage, and this happens with probability $(1-R_0^{-1}(R_0-1))^{X_F(0)}$ which is approximately $\exp(-(R_0-1)X_F(0))$. 
Compare this result to the corresponding result for the ODE approximation in Corollary~\ref{cor:kessler}.

Runaway fragment creation thus requires only one lineage among many to escape, and the excess reproduction $R_0 - 1$ trades off against the size of the initial fragment count $X_F(0)$.

\begin{remark}[Logistic branching and immigration at the boundary]\label{rem:logistic}
Each gap of (A2') may be relaxed to equality, at which the corresponding subdominant feedback survives the branching scale. Writing
\[
c := \frac{\kappa_3^2 z_I^{\mathrm{crit}}}{\kappa_4}\lim_{N} N^{\alpha_F - (\rho_1 - \rho_3)} \in [0, \infty),
\qquad
\beta := 2\kappa_2 (z_I^{\mathrm{crit}})^2 \lim_{N} N^{\alpha_F - (\rho_5 - \rho_2)} \in [0, \infty),
\]
the general near-critical limit of Theorem~\ref{thm:critical} is the \emph{logistic continuous-state branching diffusion with immigration}
\[
dz_F = \big(\theta\, z_F - c\, z_F^2 + \beta\big)\,d\tau + \sqrt{2\kappa_5\, z_F}\;dB ,
\]
with $c > 0$ iff $\rho_1 - \rho_3 = \alpha_F$ and $\beta > 0$ iff $\rho_5 - \rho_2 = \alpha_F$ (and either coefficient is $+\infty$, forcing a faster time-scale, if the corresponding gap is violated). The competition term $-c\, z_F^2$ is the self-limiting feedback by which fragments deplete the intacts that produce them --- more fragments give more intact--fragment collisions, hence fewer intacts and a lower fragment birth rate. It \emph{caps} the runaway: even in the supercritical case $\theta > 0$ the fragment population no longer explodes but converges to a positive stationary law (Lambert~\cite{lambert2005} for the logistic branching diffusion; Kawazu and Watanabe~\cite{kawazu1971} for the immigration term). The immigration $\beta > 0$ removes the absorbing boundary at $0$, so the debris persists rather than going extinct. Physically, the intact-depletion feedback supplies an intrinsic ceiling on Kessler-syndrome runaway that the fixed-parameter analysis of Corollary~\ref{cor:kessler} does not see.
It is important to note that the finite Kessler runaway in this remark is a very special situation, and that it is unlikely that the parameters in the real LEO environment will meet the criteria in this remark.
\end{remark}

\section{AI Acknowledgement}
Claude Code (Anthropic, Claude Opus) was used to assist with numerical scripting, figure generation, and manuscript editing.  All mathematical results, modeling assumptions, and scientific conclusions are the authors' own.  No AI tool was used to generate or interpret scientific results.

\bibliographystyle{plain}
\bibliography{references}

\begin{appendix}
\section{Background}
For the sake of readability for a broad audience, we briefly review some of the probability theory used in this work.

\subsection{Poisson Point Processes}

A \emph{point process on $\bR$} is a non-decreasing, countable collection of points of $\bR$.
A point process is \emph{simple} if, almost surely, there is at most one point per location in $\bR$.
A simple point process is \emph{renewal} if the gaps between successive points are independent and identically distributed, and \emph{stationary} if the distribution of the number of points in an interval depends only on the length of the interval.

A \emph{unit-intensity} Poisson process is a renewal process with gap distribution $\Exp(1)$.
Equivalently, a point process is a unit-intensity Poisson process if it is stationary, the distribution of the number of points in an interval of length $L$ is $\Poi(L)$, and the numbers of points in two disjoint intervals are independent.

We will use the more general Poisson process with \emph{intensity measure} $\Lambda$: the numbers of points in disjoint intervals are independent, and for the interval $I$, the number of points is distributed as $\Poi(\Lambda(I)).$
Here, the intensity measure could itself be determined by a stochastic process.

The following property is fundamental and we use it heavily.
If $Y(t)$ is a Poisson process with intensity measure $\Lambda$, and $Z$ is a unit-intensity Poisson process, then for all $t \geq 0$, we have $Y(t) \sim Z(\int_{0}^{t}\Lambda(s)ds)$.
This is called the \emph{time-change representation}.

\subsection{Markov Processes}
A \emph{Markov process} is a stochastic process whose future evolution depends only on its current state: that is, how it got to that state has no impact on its future evolution.
Let $(\Omega, \cF, \pr)$ be a probability space upon which a stochastic process $X$ is defined, and $\{\cF_t\}_{t \geq 0}$ the canonical filtration.
The Markov property states that for $s \geq t$,
\[\pr(X(s) \in A \mid \cF_t) = \pr(X(s) \in A \mid X(t)).\]
Due to the memoryless property of the exponential distribution, Poisson processes are Markov.

In this paper, we study Markov processes with the property that instantaneous transition rates depend on the current state.
The relevant subclass of these processes is \emph{density-dependent Markov processes}.

Concretely, the processes we consider are continuous-time Markov chains on a countable state space --- for us a subset of $\bZ_{\geq 0}^2$, recording the counts of intacts and fragments. From a state $x$, the process holds for an exponentially distributed time and then jumps to $x + \zeta$ at rate $\lambda_\zeta(x)$, for finitely many jump vectors $\zeta$. Its law is encoded by the \emph{infinitesimal generator} $\cA$, which acts on bounded functions $f$ by
\[
\cA f(x) = \sum_\zeta \lambda_\zeta(x)\big[f(x + \zeta) - f(x)\big] ,
\]
and gives the instantaneous mean rate of change of $f$ along the process; equivalently, $f(X(t)) - \int_0^t \cA f(X(s))\,ds$ is a martingale (\emph{Dynkin's formula}).

Our model is specified as a \emph{chemical reaction network}: reactions indexed by $k$, each with a net-change vector $\zeta_k$ and a state-dependent rate, or \emph{propensity}, $\lambda_k(x)$, so that $\cA f(x) = \sum_k \lambda_k(x)\big[f(x + \zeta_k) - f(x)\big]$. Applying the time-change representation for Poisson processes (next subsection) to each reaction, such a process has the pathwise \emph{random time-change representation}
\[
X(t) = X(0) + \sum_k \zeta_k\, Y_k\!\left(\int_0^t \lambda_k(X(s))\,ds\right) ,
\]
with $Y_1, Y_2, \ldots$ independent unit-intensity Poisson processes. Writing $\widetilde Y_k := Y_k - \mathrm{id}$ for the compensated (mean-zero) process, each summand splits into a predictable \emph{drift} and a mean-zero \emph{martingale},
\[
\zeta_k\, Y_k\!\Big(\!\int_0^t \lambda_k\,ds\Big)
= \zeta_k\!\int_0^t \lambda_k(X(s))\,ds
\;+\; \zeta_k\, \widetilde Y_k\!\Big(\!\int_0^t \lambda_k\,ds\Big),
\]
the martingale having predictable quadratic variation $\zeta_k^2 \int_0^t \lambda_k(X(s))\,ds$. This drift--martingale decomposition is the starting point for the fluid and diffusion limits.

Finally, a family of such processes is \emph{density-dependent} when the propensities scale with a large parameter $N$ --- a system size or abundance --- so that, after rescaling the state by powers of $N$, the propensities are of order $N$ times a fixed function of the rescaled state. In the classical setting a single exponent governs every species; the multiscale model of this paper assigns each species its own abundance exponent, which is precisely what produces the separation of time-scales.

\subsection{Fluid and Diffusion Limits}
Let $\{X^N\}_N$ be a family of density-dependent Markov processes indexed by the scaling parameter $N$, and let $Z^N := N^{-\alpha} X^N$ be the rescaled process (componentwise, with abundance exponents $\alpha$), normalized to be of order one as $N \to \infty$. Two complementary limits describe such a family.

\emph{Fluid limit.} In the drift--martingale decomposition of the previous subsection, the martingale carries the prefactor $N^{-\alpha}$ but only the \emph{square root} of the reaction count, so it is of order $N^{-\alpha/2}$ --- smaller than the $O(1)$ drift. When $\alpha > 0$ the noise therefore vanishes as $N \to \infty$, and $Z^N$ concentrates on the solution of a deterministic ordinary differential equation,
\[
\dot z(t) = F(z(t)), \qquad F(z) = \sum_k \zeta_k\, \bar\lambda_k(z) ,
\]
where $\bar\lambda_k$ is the limiting rescaled propensity and $F$ the mean-field vector field. 

\emph{Diffusion limit.} The fluctuations of $Z^N$ about its fluid limit, magnified by $N^{\alpha/2}$, converge to a Gaussian process. Writing $V^N := N^{\alpha/2}(Z^N - z)$, one has $V^N \Rightarrow V$ in the Skorokhod space $D_{\bR^d}[0,\infty)$ of c\`adl\`ag paths --- with $\Rightarrow$ denoting weak convergence --- where $V$ solves the linear stochastic differential equation
\[
dV(t) = DF(z(t))\, V(t)\,dt + \sigma(z(t))\,dB(t) ,
\]
with $DF$ the Jacobian of $F$, $B$ a standard Brownian motion, and diffusion matrix $\sigma\sigma^{\top}(z) = \sum_k \zeta_k \zeta_k^{\top}\, \bar\lambda_k(z)$ read off from the martingale quadratic variations. 
The limit $V$ is an Ornstein--Uhlenbeck-type process, and this Gaussian approximation is the \emph{diffusion limit}.

These two regimes are not exhaustive. When the fluid vector field vanishes to leading order --- as it does at a critical point --- the $N^{\alpha/2}$ centering degenerates, and the fluctuations, gathered over a correspondingly longer time-scale, converge instead to a nonlinear diffusion. For the branching structure of the fragment population this limit is a Feller diffusion rather than a Gaussian one; see Section~\ref{sec:critical}. Determining which of these limits governs each species, and on which time-scale, is the main concern of this paper.

\end{appendix}

\end{document}